\definecolor{dark-red}{rgb}{.54,.0,.0}
\definecolor{dark-green}{rgb}{.0,.4,.0}
\definecolor{dark-blue}{rgb}{.04,.04,.4}
\newtheorem{Thm}{Theorem}[section]
\newtheorem{rmrk}[Thm]{Remark}
\newtheorem{prpstn}[Thm]{Proposition}
\renewcommand{\theThm}{\arabic{Thm}} %CHANGED AFTER THE INTRODUCTION
\newcommand{\xR}{{\mathcal{R}}}
\renewcommand{\phi}{\varphi}
\renewcommand{\to}{\rightarrow}
\newcommand{\cal}{\mathcal}
\newcommand{\nc}{\newcommand}
\nc{\Om}{\Omega}
\nc{\en}{\mbox{\bf e}_1}
\nc{\snd}{S^{\frac{N}{2}}}
\nc{\sndn}{\frac{S^{\frac{N}{2}}}{N}}
\nc{\xe}{\frac{|x|}{\sqrt{\eps}}}
\nc{\sqrte}{\sqrt{\eps}}
\nc{\epsnd}{\eps^{\frac{N}{2}}}
\nc{\epsndd}{\eps^{\frac{N-2}{2}}}
\nc{\rhoey}{\left[\rho\left(\sqrte|y|\right)\right]}
\nc{\rhoeys}{\rho\left(\sqrte|y|\right)}
\nc{\eps}{\varepsilon}
\nc{\eum}{e^{-\sqrt{\lambda_{1}-\eta}\,|t|}}
\nc{\reta}{{\textstyle\sqrt{\lambda_{1}-\eta}\,}}
\nc{\lum}{\lambda_{1}}
\nc{\fum}{{\textstyle \frac{N-l-1}{|t|}}}
\nc{\nb}{\Delta}
\nc{\al}{\alpha}
\nc{\px}{\phi(y)}
\nc{\ed}{e^{-\sqrt{\lambda_{1}-\mu}\,r}}
\nc{\etr}{e^{-\sqrt{\lambda_{1}-\lambda}\,|t|}}
\nc{\etru}{e^{-\sqrt{1+(\lambda_{1}-\lambda)|t|^2}}}
\nc{\eq}{e^{-\sqrt{\lambda_{1}-\mu}\,|t|}}
\nc{\rum}{|t|^{-\frac{l-1}{2}}}
\nc{\rdo}{|t|^{-\frac{l+1}{2}}}
\nc{\rtr}{|t|^{-\frac{l+3}{2}}}
\nc{\ttr}{|t|^{-\frac{l+3}{2}}}
\nc{\fdo}{{\textstyle\frac{l-1}{2}}}
\nc{\ftr}{{\textstyle\frac{l-3}{2}}}
\nc{\fq}{{\textstyle\frac{l+1}{2}}}
\nc{\rmu}{{\textstyle\sqrt{\lambda_{1}-\mu}\,}}
\nc{\rmul}{{\textstyle\sqrt{\lambda_{1}-\lambda}\,}}
\nc{\hoz}{{\rm H^1_{0}}(\Omega)}
\nc{\g}{g(u)}
\nc{\ux}{u(t,y)}
\nc{\p}{\Psi}
\nc{\un}{\frac{\partial u}{\partial n}}
\nc{\ttt}{|t|^{-\frac{l-1}{2}}}
\nc{\pl}{\Psi'}
\nc{\pll}{\Psi''}
\nc{\wk}{\rightharpoonup}
\begin{document}

\title[Nodal solutions for critical elliptic equations]{SIGN CHANGING SOLUTIONS FOR ELLIPTIC EQUATIONS WITH CRITICAL GROWTH IN CYLINDER TYPE
DOMAINS}\thanks{The authors are partially supported by FCT}

\author{Pedro Gir\~ao}\address{Mathematics Department, Instituto Superior T\'{e}cnico,
Av. Rovisco Pais, 1049-001 Lisboa, Portugal}
\email{pgirao@math.ist.utl.pt}
\author{Miguel Ramos}\address{CMAF and Faculty of Sciences, Universidade de Lisboa, 
Av.\ Prof.\ Gama Pinto, 2, 1649-003 Lisboa, Portugal}
\email{mramos@lmc.fc.ul.pt}

\begin{abstract}
We prove the existence of positive and of nodal solutions for
$-\Delta u  = |u|^{p-2}u+\mu |u|^{q-2}u$, $u\in {\rm H_0^1}(\Omega)$, where $\mu >0$ and $2< q <
p=2N(N-2)$, for a class of open subsets $\Omega$ of $\xR^N$ lying between two infinite cylinders. 
\end{abstract}

\subjclass[2000]{35J20, 35J25, 35J65, 35B05}

\keywords{Nodal solutions, cylindrical domains, semilinear elliptic equation, 
critical Sobolev exponent, con\-cen\-tra\-tion-compactness}

\maketitle

\section*{Introduction}

We are concerned with the existence of nonzero solutions for the nonlinear second order elliptic equation
$$
-\Delta u  = |u|^{p-2}u+\mu |u|^{q-2}u, \qquad u\in {\rm H_0^1}(\Omega),\eqno{{\rm (P)}}$$
where $\Omega$ is a smooth unbounded domain of $\xR^N$ with $N\geqslant 3$, $\mu \in \xR^{+}$, $ 2 < q < p$
and $p$ is the critical Sobolev exponent $p=2^*=2N/(N-2)$. Without loss of generality we assume that $0\in
\Omega$.

In the case where $\Omega$ is bounded, the proof of the existence of positive and of nodal (sign changing)
solutions for (P) or similar equations goes back to the work in \cite{BN,CSS,Ta}. In the case where $\Omega$
is unbounded and $p$ is subcritical ($p<2^*$), we refer for example to \cite{DF,Zhu}. On the other hand,
motivated by the work in \cite{BTW,BCS,DF,Li}, in \cite{RWW} the authors prove the existence of a positive
solution for a class of unbounded domains, concerning the (somewhat simpler) equation $-\Delta u  =\lambda u
+|u|^{p-2}u$, where $\lambda$ is positive and small (see also \cite{ST} for a related result).

The present work complements the quoted results. Following \cite{DF,RWW}, we fix a number
$1\leqslant \ell \leqslant N-1$ and write $\xR^N=\xR^{\ell}\times
\xR^{N-\ell}$, $z=(t,y) \in \xR^{\ell}\times
\xR^{N-\ell}$. For a given subset $A \subset \xR^{N-\ell}$ we denote
$A_{\delta}=\{y\in
\xR^{N-\ell}:\mbox{ dist}(y,A)<\delta\}$ and $\widehat{A}=\xR^{\ell}\times A$. Also, for
$t\in \xR^{\ell}$ we let $\Omega^{t}=\{y\in
\xR^{N-\ell}:\,(t,y)\in \Omega\}$. We shall consider both situations (H) and (H)$_0$ below:

\begin{enumerate}
\item[(H)] There exist two nonempty bounded open sets $F\subset G \subset
\xR^{N-\ell}$ such that $F$ is a Lipschitz domain and
$\widehat{F}
\subset \Omega \subset \widehat{G}$.  Moreover,
for each $\delta >0$ there is $R>0$ such that $\Omega^t\subset
F_{\delta}$
for all $|t|\geqslant R$.
\end{enumerate} 

\begin{enumerate}
\item[(H)$_0$] There exists an  open bounded set $G \subset
\xR^{N-\ell}$   such that $\Omega \subset \widehat{G}$ and moreover  for each $\delta >0$ there is $R>0$
such that
$\Omega^t\subset 
{B}_{\xR^{N-\ell}}(0,\delta)$  for all $|t|\geqslant R$.
\end{enumerate}
We have denoted by $B_{\xR^{N-\ell}}(0,\delta)$ the open
ball in $\xR^{N-\ell}$ centered at the origin with radius $\delta >0$. The case (H)$_0$ can be seen as a
limit case of (H), with $F=\{0\}$. We prove the following.

\begin{Thm}
Consider problem {\rm (P)} with $2 < q < p = 2^*$ and assume  either {\rm (H)} or {\rm
(H)$_0$}. Then, for every $\mu>0$, the problem admits a positive (and a
negative) solution of least energy.
\end{Thm}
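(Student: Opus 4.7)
The plan is to produce a positive solution as a minimizer of
$
I(u) = \tfrac12 \int_\Omega |\nabla u|^2 - \tfrac1p \int_\Omega u_+^p - \tfrac{\mu}{q} \int_\Omega u_+^q
$
on the positive Nehari manifold $\mathcal N_+ = \{u\in \hoz\setminus\{0\}:\ u\geq 0,\ \langle I'(u),u\rangle = 0\}$; nontrivial critical points of $I$ are nonnegative by testing against $u_-$, and the strong maximum principle applied to $-\nb u = u^{p-1} + \mu u^{q-1}$ makes them strictly positive. Since $p,q>2$, $I$ has the mountain pass geometry and the MP level $c$ coincides with $\inf_{\mathcal N_+} I$.

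A first, quantitative ingredient is the strict Brezis--Nirenberg type upper bound $c < \sndn$, required to exclude concentration in the compactness analysis. To get it, I would test with a rescaled Aubin--Talenti instanton $U_\eps(x) = \eps^{-(N-2)/2}U(x/\eps)$ centered at $0\in\Omega$ and cut off to $\Omega$, and compute $\max_{s\geq 0} I(sU_\eps)$ as $\eps\to 0^+$. The classical expansion yields $\sndn$ plus a $\mu$-dependent negative contribution of order $\eps^{\alpha(N,q)}$ (coming from $\|U_\eps\|_q^q$) that dominates the cut-off error, giving the strict inequality.

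For the compactness step, any Palais--Smale sequence $(u_n)$ at level $c$ is bounded in $\hoz$ via the Nehari identity, so up to a subsequence $u_n\wk u$ with $u\geq 0$ a weak solution of (P). A Struwe-type profile decomposition of $u_n - u$ then isolates two potential obstructions to strong convergence: (i) rescaled Talenti bubbles $U_{\eps_n}(\,\cdot\,-x_n)$ concentrating at points of $\overline\Omega$, each carrying energy at least $\sndn$, and (ii) $t$-translated profiles $v$ arising as weak limits of $u_n(\,\cdot\,+t_n,\,\cdot\,)$ with $|t_n|\to\infty$ in $\xR^\ell$. Possibility (i) is immediately excluded by $c<\sndn$ together with the nonnegativity of $I$ on $\mathcal N_+$.

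The hard part is case (ii), which is where the geometric hypotheses are used. Under (H)$_0$, the requirement $\Omega^t\subset B_{\xR^{N-\ell}}(0,\delta)$ for arbitrarily small $\delta>0$ at infinity forces $v$ to be supported in $\xR^\ell\times\{0\}$, so $v\equiv 0$ and escape is impossible. Under (H), the limit domain is the cylinder $\widehat F$ and $v\in H^1_0(\widehat F)$ would be a nontrivial critical point of the analogous functional $I_F$, whence $I_F(v)\geq c_{\widehat F}$. The inequality $c\leq c_{\widehat F}$ is automatic from zero-extending test functions from $\widehat F$ to $\Omega$, and the remaining work, which I expect to be the main obstacle, is to strengthen it to $c < c_{\widehat F}$ when $\widehat F\subsetneq\Omega$ (thereby contradicting the existence of the escaped $v$); in the symmetric case $\widehat F=\Omega$ one instead uses the $\xR^\ell$-translation invariance of the problem to restore compactness modulo translation. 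In either situation $u_n$ converges strongly, after a suitable translation, to the sought-for least-energy positive solution. A negative solution is obtained by running the analogous argument with $u_-$ in place of $u_+$.
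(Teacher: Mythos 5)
Your proposal follows the same overall strategy as the paper: show the Nehari level is below $S^{N/2}/N$, use concentration--compactness to rule out local bubbling, and use the cylinder hypotheses to rule out escape at $t$-infinity by comparing with the limit level $c_0(\widehat F)$. The differences are mostly cosmetic. The paper minimizes the \emph{even} functional $I(u)=\frac12\int|\nabla u|^2-\frac1p\int|u|^p-\frac1q\int|u|^q$ over the full Nehari manifold and gets positivity of the minimizer for free from the identity $I(u)=I(u^+)+I(u^-)$ and $I'(u^\pm)u^\pm=0$, whereas you truncate the nonlinearity to $u_+^p+\mu u_+^q$ and work on the positive Nehari manifold; both are standard and yield the same result. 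Likewise you phrase the dichotomy in the language of Struwe-type profile decomposition (Talenti bubbles plus translated profiles), while the paper uses the Lions measure-theoretic lemma with ``mass at infinity'' ($\mu_\infty,\nu_\infty,\eta_\infty$) \`a la Ben-Naoum--Troestler--Willem; these are interchangeable here, although the measure version lets the paper avoid extracting the translated profile explicitly in case (H) --- it simply truncates $u_n$ by a cut-off $\psi_R$, lands in $H^1_0(\widehat{F_\delta})$, rescales onto the Nehari manifold to get $I(u_n)\geqslant c_0(\widehat{F_\delta})+\mathrm o(1)$, and lets $\delta\to0$ using $c_0(\widehat{F_\delta})\to c_0(\widehat F)$.

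The one place where your write-up has a genuine gap is the step you yourself flag as ``the main obstacle'': strengthening $c_0\leqslant c_0(\widehat F)$ to the strict inequality $c_0<c_0(\widehat F)$ when $\widehat F\subsetneq\Omega$. The paper asserts this by invoking that $c_0(\widehat F)$ is \emph{attained}; the missing justification is the standard domain-monotonicity argument: let $u^*$ attain $c_0(\widehat F)$, extend by zero to $\Omega$, pick $\varphi\geqslant0$ smooth with support in $\Omega$ meeting $\partial\widehat F$ from outside, and note that by the Hopf lemma $I'(u^*)\varphi=\int_{\widehat F}\nabla u^*\cdot\nabla\varphi=\int_{\partial\widehat F}\frac{\partial u^*}{\partial\nu}\,\varphi<0$. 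Differentiating $s\mapsto\max_{t>0}I\bigl(t(u^*+s\varphi)\bigr)$ at $s=0$ and using $I'(u^*)u^*=0$ shows the Nehari value drops strictly, giving $c_0<c_0(\widehat F)$. Without this (or some equivalent argument) the contradiction in case (H) is not closed. Your treatment of (H)$_0$ is also a bit loose: simply declaring that a translated profile must ``be supported in $\xR^\ell\times\{0\}$'' skips the quantitative mechanism; the paper instead shows $\int u_n^2\to0$ via the vanishing Poincar\'e constant on thin cross-sections, hence $\int|u_n|^q\to0$ by interpolation, hence $\eta_\infty=0$, and then uses the mass balance at infinity together with $c_0<S^{N/2}/N$ to force $\nu_\infty=0$, contradicting $\liminf\int|u_n|^p>0$. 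The same conclusion, but you should spell out the interpolation step rather than appealing to ``support forced to a null set.''
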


In order to prove the existence of nodal solutions in case (H), we impose further restrictions on
$\Omega$, namely that $\Omega$ approaches $\widehat{F}$ ``smoothly and slowly." 

\begin{enumerate}
\item[(H)$'$]  Assume (H) and that $\Omega$ is of class ${\rm C^{1,1}}$ in such a way that the local charts as
well as their inverses have uniformly bounded Lipschitz constants. Moreover, there exist constants $m>0$ and
$0<a_1<a_0$ such that 
$\left(1+\frac{a}{|t|^m}\right)\;F \subset \Omega^{t}$ for every $a\in [a_1,a_0]$ and every $|t|$
large.
\end{enumerate}

\begin{Thm}
Consider problem {\rm (P)} with $2 < q < p = 2^*$ and assume  either {\rm (H)$'$} or
{\rm (H)$_0$}. In case {\rm (H)$_0$} holds, assume moreover that
$q>(N+2)/(N-2)$. Then, for every $\mu>0$, the problem admits a  sign changing solution.
\end{Thm}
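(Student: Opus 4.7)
My plan is to produce the sign-changing solution as a minimizer of the energy on a nodal Nehari set, and to recover compactness by showing that the minimum lies strictly below an explicit concentration threshold. Set
$$I(u) = \tfrac{1}{2}\int_\Omega|\nabla u|^2 - \tfrac{1}{p}\int_\Omega|u|^p - \tfrac{\mu}{q}\int_\Omega|u|^q, \qquad u \in H_0^1(\Omega),$$
and consider the nodal constraint set
$$\mathcal{M} := \bigl\{u \in H_0^1(\Omega) : u^\pm \not\equiv 0,\ \langle I'(u), u^+\rangle = \langle I'(u), u^-\rangle = 0\bigr\},\qquad c^* := \inf_\mathcal{M} I.$$
A standard projection argument shows that for every $u$ with $u^\pm \not\equiv 0$ there is a unique $(s,t)\in(0,\infty)^2$ with $su^+ - tu^- \in \mathcal{M}$, and that any minimizer of $I|_\mathcal{M}$ is automatically a (sign-changing) critical point of $I$ in $H_0^1(\Omega)$.

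The first ingredient is a compactness criterion. Applying the concentration-compactness splitting that already underlies Theorem~1 separately to the positive and the negative parts of a minimizing sequence, one checks that $c^*$ is achieved whenever
$$c^* < c_0 + \tfrac{1}{N}S^{N/2}, \qquad (*)$$
where $c_0>0$ is the least energy level of Theorem~1 and $S$ is the Sobolev best constant. The heuristic is that loss of compactness can only occur through a single Aubin-Talenti bubble of energy $\tfrac{1}{N}S^{N/2}$ escaping from one of the two lobes while the other lobe retains at least the least energy $c_0$. Residual translation invariance in the cylinder direction, which is the main obstacle in unbounded-domain problems, is broken by the geometric hypotheses: in case $(H)'$ by the Lipschitz structure of $F$ together with the $C^{1,1}$ regularity of $\partial\Omega$ and the slow-decay condition, and in case $(H)_0$ automatically by the shrinking cross-section.

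The central task is therefore to establish $(*)$. I would take the positive least-energy solution $u_0$ produced by Theorem~1 and a cut-off $w_\varepsilon \in H_0^1(\Omega)$ of the Aubin-Talenti extremal $U_\varepsilon(\,\cdot\,-z^*)$ of concentration scale $\varepsilon$, centered at a point $z^*=(t^*,y^*)\in\Omega$. Computing $\sup_{s,t>0} I(su_0 - tw_\varepsilon)$ and projecting the argmax onto $\mathcal{M}$ yields
$$c^* \leq c_0 + \tfrac{1}{N}S^{N/2} - C\mu\,\|w_\varepsilon\|_q^q + (\text{truncation and interaction errors}),$$
where the interaction between $u_0$ and $w_\varepsilon$ is controlled by the exponential decay of $u_0$ along the cylinder direction (as produced en route to Theorem~1) and by the fact that $z^*$ can be chosen far from the essential support of $u_0$.

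The hard part is to pick $z^*$ and $\varepsilon$ so that the $\mu$-gain $\|w_\varepsilon\|_q^q$ dominates the truncation error. In case $(H)'$, the slow-decay condition $(1+a/|t|^m)F \subset \Omega^t$ and the uniform $C^{1,1}$ regularity of $\partial\Omega$ allow me to center the bubble at a fixed $y^* \in F$ and let $|t^*|\to\infty$ while keeping $z^*$ at a uniform positive distance from $\partial\Omega$, so the truncation error is small enough for the $\mu$-gain to prevail for every $q\in(2,2^*)$. In case $(H)_0$, however, the cross-section $\Omega^{t^*}$ shrinks to the origin as $|t^*|\to\infty$, forcing a coupled limit $\varepsilon\downarrow 0$ with $|t^*|\uparrow\infty$ in which the truncation error grows; a careful book-keeping of powers of $\varepsilon$ shows that the $\mu$-gain still beats the error precisely when $q > (N+2)/(N-2)$, which is exactly the extra restriction assumed in the statement. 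Once $(*)$ is secured, the compactness criterion furnishes a minimizer $\bar u \in \mathcal{M}$, which is the desired nodal solution of~(P).
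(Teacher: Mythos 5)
Your framework (nodal Nehari set, strict inequality below a concentration threshold, then compactness) matches the paper's, but the threshold $(*)$ is wrong in case~(H)$'$, and with it the entire construction of the test function. Under~(H)$'$ the domain $\Omega$ approaches the infinite cylinder $\widehat F=\xR^\ell\times F$ as $|t|\to\infty$, and $\widehat F$ \emph{is} translation-invariant in the $t$-direction. That invariance is not ``broken by the geometric hypotheses''; it is exactly the residual symmetry at infinity. Consequently the cheapest way for a Palais--Smale sequence to lose compactness is for one lobe to run off to $|t|=\infty$ carrying the least energy $c_0(\widehat F)$ of the limiting cylinder, not to concentrate as an Aubin--Talenti bubble carrying $S^{N/2}/N$. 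Since $c_0(\widehat F)<S^{N/2}/N$, the correct threshold is the smaller number $c_0+c_0(\widehat F)$ (the paper writes $c_0+c_0^\infty$ with $c_0^\infty=c_0(\widehat F)$ in this case), and $c^*<c_0+S^{N/2}/N$ is not sufficient for compactness. This also kills your test function: a cut-off Aubin--Talenti extremal placed far along the cylinder has energy close to $S^{N/2}/N$, so $c_0+I(w_\eps)$ can never drop below $c_0+c_0(\widehat F)$ no matter how you optimize over $\varepsilon$ and $z^*$.

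What the paper does instead in case~(H)$'$ is translate and \emph{rescale} a cut-off of the least-energy solution $\psi$ of the limiting cylinder $\widehat F$, fitting it inside $\Omega$ at distance $MR$ using the slow-approach condition $(1+a/|t|^m)F\subset\Omega^t$: the rescaling factor $\lambda_R=1+a_0/((M+A)^m R^m)$ produces a \emph{polynomial} gain $-cR^{-m}$ from the subcritical term, while the truncation and interaction errors are \emph{exponentially} small because solutions decay like $e^{-\delta|t|}$ along the cylinder (Proposition~\ref{decay-for-(H)}). This exponential-versus-polynomial competition wins for every $q\in(2,p)$, with no extra restriction. Your description of case~(H)$_0$ is essentially sound: the threshold $c_0+S^{N/2}/N$ is correct there, and the power counting of $\varepsilon^{(N-2)/2}$ (truncation) against $\varepsilon^{N(1-q/p)}$ (gain from the $\mu$-term) produces exactly $q>(N+2)/(N-2)$; the paper simply centers the bubble at the origin and cuts the least-energy solution away from the origin at scale $\sqrt\varepsilon$, rather than sending the bubble out along the cylinder.
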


In Theorem 2 the conclusion is that (P)  has a pair of sign changing solutions, since
the nonlinearity is odd. In case {\rm (H)$_0$}, the extra restriction on $q$ is merely needed in lower
dimensions ($N=3,4,5$), since
$(N+2)/(N-2)\geqslant 2$ for $ N\geqslant 6$. In fact, Theorem 2 still holds if $q=(N+2)/(N-2)$ provided
$\mu$ is sufficiently large (see the remark which follows the proof of Proposition \ref{c1-estimate-(H)0}).

The proof of our main theorems is given in Section 2 (see
Propositions \ref{c0-is-attained} and \ref{c1-is-attained}); it relies on the concentration-compactness
principle at infinity and on some ideas of
\cite{CSS,RWW}. Section 3 provides  technical estimates which are needed in the proof of Theorem 2. We
also give further information on the decay properties of the solutions found in Theorems 1 and 2.

\setcounter{equation}{0}
\renewcommand{\theThm}{\thesection.\arabic{Thm}}

\section{Concentration-compactness}

It is well known that the solutions of (P) correspond to critical points of the energy
functional (for simplicity of notations, we take $\mu=1$ in (P)):
$$
I(u)=\frac{1}{2}\int |\nabla u|^2 - \frac{1}{p}\int |u|^p - \frac{1}{q}\int |u|^q,\qquad u \in
{\rm H_0^1}(\Omega),$$
where the integrals are taken over the domain $\Omega$. We recall $2 < q < p = 2^*$. It follows
from assumptions (H) or (H)$_0$ that we can choose the norm
$||u||:=\left(\int |\nabla u|^2\right)^{1/2}$  in ${\rm H_0^1}(\Omega)$. Let
\begin{equation}\label{c0}
c_0:=\inf\{ I(u): \; u\in {\rm H_0^1}(\Omega),\; u\neq 0 \; {\rm and} \; I'(u)u=0\}. 
\end{equation}
It is also clear that $c_0 >0$ and that every nonzero critical point $u$ of $I$ is such that $I(u)\geqslant
c_0$. The following result proves  Theorem 1.

\begin{prpstn}\label{c0-is-attained}  Under assumptions {\rm (H)} or {\rm (H)$_0$}, the infimum in 
{\rm (\ref{c0})} is attained in a critical point of $I$.\end{prpstn}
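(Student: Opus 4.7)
The plan is to apply the concentration--compactness principle, in both its classical form and its ``at infinity'' variant, to a minimizing sequence for $c_0$: a strict energy threshold will rule out finite--point and infinite--point concentrations, while the degenerate alternative $u\equiv 0$ is excluded by exploiting the geometry of $\Omega$ at infinity together with, in case (H), a strict comparison between $c_0$ and the energy of the limit cylinder problem on $\widehat F$.

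First, I would produce, by Ekeland's principle applied to the Nehari constraint, a bounded Palais--Smale sequence $(u_n)\subset\mathcal N:=\{u\in\hoz\setminus\{0\}\colon I'(u)u=0\}$ with $I(u_n)\to c_0$; boundedness follows from combining $I(u_n)-\frac{1}{p}I'(u_n)u_n\to c_0$ with $p>2$. Passing to a subsequence, $u_n\wk u$ in $\hoz$, $u_n\to u$ a.e.\ and locally in $L^r$ for $r<2^*$. Taking limits in $I'(u_n)\phi=o(1)$ against $\phi\in C^{\infty}_c(\Omega)$ shows that $u$ is a critical point of $I$; the whole task is therefore to show $u\neq 0$, for then $u\in\mathcal N$ gives $I(u)\geqslant c_0$ and Brezis--Lieb together with strong $L^p$-convergence (established next) gives $I(u)\leqslant c_0$.

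Second, I would establish the strict bound $c_0<\snd/N$ by a Brezis--Nirenberg style test: project a truncated, rescaled Aubin--Talenti instanton (centred at an interior point of $\Omega$) onto $\mathcal N$; the subcritical correction $-\frac{1}{q}\int|u|^q$ produces a strictly negative term of the right order in the rescaling parameter, since $q<2^*$. With this strict inequality in hand, Lions' concentration--compactness principle and its ``at infinity'' version (Chabrowski) forbid any Dirac mass $\mu_i$ at a finite point, and any escaping mass $\mu_\infty$: each such quantum would contribute at least $\snd/N$ to the limiting energy, which, in combination with $I(u_n)-\frac{1}{p}I'(u_n)u_n\to c_0$, would contradict $c_0<\snd/N$. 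Consequently $u_n\to u$ strongly in $L^p(\Omega)$.

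The main remaining task is to rule out $u\equiv 0$, since the subcritical $L^q$-mass may still escape to infinity along the $t$-direction. Under (H)$_0$, the sections $\Omega^t$ shrink to the origin as $|t|\to\infty$, so a Poincar\'e inequality on thin slices forces uniform $L^q$-decay at infinity and hence strong $L^q$-convergence; then $u_n\in\mathcal N$ with $u_n\to 0$ in $L^p\cap L^q$ would force $\|u_n\|^2\to 0$, contradicting $c_0>0$. Under (H), I invoke translation invariance in the $t$-direction: via a Lieb-type lemma for the $L^q$-mass, choose $(t_n,0)\in\xR^\ell\times\{0\}$ so that $\tilde u_n(\cdot):=u_n(\cdot-(t_n,0))$ has a nonzero weak limit $\tilde u$; if $(t_n)$ is bounded we contradict $u\equiv 0$, while if $|t_n|\to\infty$ the translated domains Hausdorff-converge to $\widehat F$ and $\tilde u\in H^1_0(\widehat F)$ is a nonzero nonnegative critical point of the analogous functional $I_{\widehat F}$, yielding $c_0\geqslant I_{\widehat F}(\tilde u)\geqslant c_0^{\widehat F}$. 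The contradiction then comes from the strict inequality $c_0<c_0^{\widehat F}$: were equality to hold, $\tilde u$ would attain $c_0^{\widehat F}$ on $\widehat F$, and its zero-extension to $\Omega$ would attain $c_0$ on $\mathcal N$ and thus be a critical point of $I$ on $\Omega$; $H^2_{\rm loc}$-regularity together with Hopf's lemma applied on $\widehat F$ (to the positive ground state $\tilde u$) would then contradict the vanishing of $\tilde u$ just outside $\widehat F$. This strict comparison with the limit-cylinder energy, which crucially uses $\widehat F\subsetneq\Omega$, is the main technical obstacle of the argument.
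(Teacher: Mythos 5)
Your overall strategy -- Palais--Smale sequence at level $c_0$, concentration--compactness with the ``at infinity'' refinement, the strict threshold $c_0<S^{N/2}/N$, and the case split between (H) and (H)$_0$ to rule out $u\equiv 0$ -- matches the paper's. However, there is a genuine logical gap in your second paragraph. You claim that the threshold $c_0<S^{N/2}/N$ forbids \emph{any} escaping mass $\mu_\infty$ because ``each such quantum would contribute at least $S^{N/2}/N$,'' and deduce strong $L^p$-convergence. This quantization is correct for the local masses $\mu_i$, where $\mu_i=\nu_i$, but it fails for the at-infinity masses: testing $I'(u_n)\to 0$ against $u_n\psi_R$ gives $\mu_\infty=\nu_\infty+\eta_\infty$, not $\mu_\infty=\nu_\infty$, so the chain $\mu_\infty\geqslant S\nu_\infty^{2/p}$ does \emph{not} bootstrap to $\mu_\infty\geqslant S^{N/2}$ when $\eta_\infty>0$. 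In a cylinder, a whole ground-state profile can escape to $|t|\to\infty$ carrying comparable $L^p$, $L^q$ and $L^2$ mass; this contributes $\frac{1}{N}\nu_\infty+(\frac12-\frac1q)\eta_\infty$, which can be anywhere in $(0,c_0^\infty]$, not at least $S^{N/2}/N$. So you cannot conclude $u_n\to u$ strongly in $L^p$ at that stage; the strong convergence only falls out \emph{after} you have shown $u\neq 0$ (exactly as the paper notes parenthetically in its Step~4). Your third paragraph, which rules out $u\equiv 0$, is in fact doing the real work that your second paragraph claimed was already done -- the argument as written is circular in its ordering.

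For case (H), your route differs from the paper's: you translate the sequence along the cylinder axis and pass to a limit on $\widehat F$ (using a Lieb-type lemma and the Hausdorff convergence $\Omega-(t_n,0)\to\widehat F$), whereas the paper truncates $u_n$ by a cutoff $\psi_R$ into $H^1_0(\widehat{F_\delta})$, reprojects onto the Nehari manifold, and compares with $c_0(\widehat{F_\delta})$, sending $\delta\to 0$ using the continuity $c_0(\widehat{F_\delta})\to c_0(\widehat F)$ established for the translation-invariant cylinder. Both should reach the contradiction $c_0\geqslant c_0(\widehat F)$ versus the strict inequality $c_0<c_0(\widehat F)$; your version requires a careful Brezis--Lieb splitting of $I_{\Omega-(t_n,0)}(\tilde u_n)$ to get $I_{\widehat F}(\tilde u)\leqslant c_0$, and a justification that the weak limit lands in $H^1_0(\widehat F)$ rather than some larger $H^1_0(\widehat{F_\delta})$ (which does follow because $F$ is Lipschitz, but deserves mention). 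The paper's truncation has the advantage of never leaving a fixed function space. Your strict-monotonicity argument for $c_0<c_0(\widehat F)$ via zero-extension, regularity and Hopf's lemma on $\partial\widehat F$ is correct and is what the paper silently invokes.
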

\begin{proof} 1. We shall omit  what concerns  standard arguments (cf.\ \cite{BN,CSS}). We first
recall that there exists a Palais-Smale sequence $(u_n)\subset {\rm H^1_0}(\Omega)$ at level $c_0$,
namely
\begin{equation}\label{PS-sequence}
I(u_n)\to c_0 \qquad \mbox{ and } \qquad  I'(u_n)\to 0.
\end{equation}
Since moreover $c_0 >0$, (\ref{PS-sequence}) implies that $\liminf ||u_n||>0$. This sequence is bounded and,
up to a subsequence, $u_n\rightharpoonup u$ weakly in ${\rm H^1_0}(\Omega)$, $u_n(x)\to u(x)$ a.e.\ and $I'(u)=0$,
$I(u)\geqslant 0$. Since  $ \liminf ||u_n||>0 $ and  $I'(u_n)u_n\to 0$, we also have that $\liminf
\int|u_n|^p >0$; indeed, if $\int|u_n|^p \to 0$ along a subsequence, then, since $(\int u_n^2)$ is
bounded, by interpolation  $\int|u_n|^q \to 0$, whence $||u_n||\to 0$, as
$I'(u_n)u_n\to 0$.\newline 
2. 
Up to subsequences, there exist measures $\mu$ and
$\nu$ on
$\Omega$ such that
$|\nabla(u_n-u)|^2
\rightharpoonup \mu$ and $|u_n-u|^p\rightharpoonup \nu$ weakly in the space $M(\Omega)$ of finite measures
in $\Om$. Clearly, $||\mu||\geqslant S ||\nu||^{2/p}$, where $S$ is the  best constant for the embedding
${\rm H^1}(\xR^N)\subset {\rm L^p}(\xR^N)$. By testing $I'(u_n)\to 0$ with $u_n\varphi$ for any $\varphi \in
{\cal D}(\xR^N)$ and since $I'(u)u\varphi=0$ we also see that
\begin{equation}\label{equal-local-mass}
||\mu||=||\nu||. 
\end{equation}
In particular,
\begin{equation}\label{nonzero-local-mass}
\mu\neq 0 \Rightarrow ||\mu||\geqslant S^{p/(p-2)}=S^{N/2}. 
\end{equation}
3. Define
\begin{eqnarray*}
\displaystyle \mu_{\infty} 
&:=& \lim_{R\to \infty}\limsup_{n\to \infty} \int_{|x|>R}|\nabla u_n|^2,\\
\displaystyle \nu_{\infty}
&:=& \lim_{R\to \infty}\limsup_{n\to \infty} \int_{|x|>R}|u_n|^{p},\\
\displaystyle \eta_{\infty}
&:=& \lim_{R\to \infty}\limsup_{n\to \infty} \int_{|x|>R}|u_n|^{q}.
\end{eqnarray*}
Again, it is clear that
\begin{equation}\label{nonzero-mass-at-infinity}
\mu_{\infty} \geqslant S\;\nu_{\infty}^{2/p}.
\end{equation}
By testing $I'(u_n)\to 0$ with $u_n\psi_R$ ($R>0$) where $\psi_R\in {\rm C}^{\infty}(\Omega)$, $0\leqslant
\psi_R\leqslant 1$ is such that $\psi_R(x)=0$ if $|x|\leqslant R$ and $\psi_R(x)=1$ if $|x|\geqslant R+1$,
it follows easily that
\begin{equation}\label{equal-mass-at-infinity}
\mu_{\infty}=\nu_{\infty}+\eta_{\infty}. 
\end{equation} 
4. We recall from \cite{BTW,BCS,Wi} that
\begin{eqnarray*}
\displaystyle \int |\nabla u_n|^2 
&=& \int |\nabla u|^2 + ||\mu|| + \mu_{\infty}+{\rm o}(1),\\
\displaystyle \int | u_n|^p 
&=& \int | u|^p + ||\nu|| + \nu_{\infty}+{\rm o}(1),\\
\displaystyle \int | u_n|^q 
&=& \int | u|^q + \eta_{\infty}+{\rm o}(1).
\end{eqnarray*}
As a consequence, and thanks to (\ref{PS-sequence}), (\ref{equal-local-mass}) and
(\ref{equal-mass-at-infinity}), we have that
\begin{equation}\label{limit-equation}
c_0=I(u)+\left(\frac{1}{2}-\frac{1}{p}\right)||\mu||+\left(\frac{1}{2}-\frac{1}{p}\right)\nu_{\infty}
+\left(\frac{1}{2}-\frac{1}{q}\right)\eta_{\infty}.
\end{equation}
In particular, $c_0\geqslant I(u)$. Since $I'(u)=0$, the proof will be complete once we show that $u\neq 0$.
Indeed, in this case we have that $I(u)\geqslant c_0$, whence $I(u)=c_0$.  (Incidentally,
(\ref{equal-mass-at-infinity}) and (\ref{limit-equation}) also show that, in fact, $||\mu||=\mu_{\infty}=0$,
hence $u_n\to u$ in ${\rm H^1_0}(\Omega)$.) \newline
5. We recall from \cite{BN} that $c_0 < S^{N/2}/N$. Since (\ref{limit-equation}) implies that
$$
c_0\geqslant \left(\frac{1}{2}-\frac{1}{p}\right)||\mu||=\frac{1}{N}\;||\mu||,$$
we deduce from (\ref{equal-local-mass})-(\ref{nonzero-local-mass}) that $\mu=\nu=0$. Thus
$u_n\to u$ in ${\rm H}^1_{{\rm loc}}(\Omega)$ and
\begin{equation}\label{new-limit-equation}
c_0=I(u)+\left(\frac{1}{2}-\frac{1}{p}\right)\nu_{\infty}
+\left(\frac{1}{2}-\frac{1}{q}\right)\eta_{\infty}.
\end{equation}
6.  Suppose first that  $\Omega=\widehat{F}$.
Since
$\liminf\int|u_n|^p >0$, by Lemma 2.1 in \cite{RWW}  we may assume that, up to translations,
$\int_{B_1(0)}|u_n|^p\geqslant c$ for some $c>0$. Since $u_n\to u$ in
${\rm H}^1_{{\rm loc}}(\Omega)$, we conclude that $u\neq 0$ and this proves Proposition \ref{c0-is-attained} for
the case
$\Omega=\widehat{F}$. Moreover, the argument shows that
$c_0(\widehat{F_{\delta}})\to c_0(\widehat{F})$ as $\delta \to 0$ (see (H) and (\ref{c0(widehat{F})}) for
the notations).\newline
7.  We complete the proof  in case (H)$_0$
holds. Assume by contradiction that $u=0$. Then, clearly $\int u_n^2\to 0$ (see
e.g.\ (2.1) in \cite{RWW}). By interpolation, also
$\int |u_n|^q\to 0$. In particular, $\eta_{\infty}=0$. Since $c_0<S^{N/2}/N$,
(\ref{nonzero-mass-at-infinity}), (\ref{equal-mass-at-infinity}) and (\ref{new-limit-equation}) show that
then $\nu_{\infty}=0$, whence, by the second identity in Step~4,
$\int |u_n|^p \to 0$. This contradicts the fact that $\liminf \int |u_n|^p >0$ and proves Proposition
\ref{c0-is-attained} under (H)$_0$.\newline
8. At last, we consider the case where (H) holds and $\Omega \neq \widehat{F}$. Again, assume by
contradiction that
$u=0$. Let
$\delta >0$ be given and take $R>0$ according to assumption (H). Let $\psi_R$ be as in Step~3 and denote
$$
v_n=u_n\psi_R\in {\rm H^1_0}(\widehat{F_{\delta}}).$$
Since $u_n \to 0$ in ${\rm H}^1_{{\rm loc}}(\Omega)$, clearly we have that
\begin{equation}\label{truncated-PS-sequence}
I(v_n)=I(u_n)+{\rm o}(1)\qquad\mbox{ and } \qquad I'(v_n)v_n={\rm o}(1).
\end{equation}
We claim that 
\begin{equation}\label{nontrivial-truncated-PS-sequence}
I(v_n)+{\rm o}(1)\geqslant c_0(\widehat{F_{\delta}}).
\end{equation}
Assuming the claim for a moment, it follows from
(\ref{truncated-PS-sequence})-(\ref{nontrivial-truncated-PS-sequence}) that
$$
c_0=I(u_n)+{\rm o}(1)=I(v_n)+{\rm o}(1)\geqslant c_0(\widehat{F_{\delta}}).$$
Since $\delta >0$ is arbitrary, we conclude that $c_0\geqslant c_0(\widehat{F})$. On the other hand, since
$\widehat{F}\subset \Omega$ and $c_0(\widehat{F})$ is attained (see Step~6 above), we must have
that $c_0 < c_0(\widehat{F})$. This contradiction completes the proof.

It remains to prove the inequality in (\ref{nontrivial-truncated-PS-sequence}). For this, we observe that
(\ref{truncated-PS-sequence}) together with the fact that $\liminf I(u_n) >0$ implies that $\liminf
||v_n||>0$ and $\liminf \int |v_n|^p >0$. Now, let
$$
w_n=t_nv_n\qquad (t_n>0)$$
be such that $I'(w_n)w_n=0$; namely, $t_n$ is given by
$$
\frac{t_n^{p-2}\int|v_n|^p + t_n^{q-2}\int|v_n|^q}{\int |\nabla v_n|^2}=1.$$
Then $(t_n)$ is bounded and,  since $I'(v_n)v_n\to 0$, we see that $t_n \to 1$. In particular,
\begin{equation}\label{same-level}
I(w_n)=I(v_n)+{\rm o}(1).
\end{equation}
Now, by definition, $I(w_n)\geqslant c_0(\widehat{F_{\delta}})$ and
(\ref{nontrivial-truncated-PS-sequence}) follows from (\ref{same-level}).\end{proof}

Using the notation in assumption (H), we denote
\begin{equation}\label{c0(widehat{F})}
c_0(\widehat{F}):=\inf\{ I(u): \; u\in {\rm H_0^1}(\widehat{F}),\; u\neq 0 \; {\rm and} \; I'(u)u=0\}<S^{N/2} /N. 
\end{equation}
We also let
\begin{equation}\label{c0-infty}
c_0^{\infty}:=c_0(\widehat{F}) \quad\mbox{in case (H)}, \qquad c_0^{\infty}:=S^{N/2} /N\quad\mbox{ in case
(H)}_0. 
\end{equation}
 We have shown in the proof of Proposition \ref{c0-is-attained} that $c_0(\widehat{F})$ is attained by a
critical point of the energy functional in ${\rm H_0^1}(\widehat{F})$. In fact, the argument above  yields the
following compactness result.

\begin{prpstn}\label{compactness-lemma} Under assumptions {\rm (H)} or {\rm (H)$_0$}, let $(u_n)\subset 
{\rm H_0^1}(\Omega)$ be such that
\begin{eqnarray}\label{compactness}
 \limsup I(u_n) < c_0^{\infty}\qquad  {\rm and} \qquad  I'(u_n)(u_n\psi)\to 0
\end{eqnarray}
for every $\psi \in {\rm C}^{\infty}(\Omega)\cap W^{1,\infty}(\Omega)$. Suppose $u_n\rightharpoonup u$ weakly in
${\rm H^1_0}(\Omega)$, $u_n(x)\to u(x)$ a.e.\ and $I'(u)(u\psi)=0$ for such functions $\psi$. Then $u_n\to u$ in
${\rm H^1_0}(\Omega)$.\end{prpstn}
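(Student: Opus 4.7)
The plan is to re-run the proof of Proposition~\ref{c0-is-attained} under the weaker hypotheses (\ref{compactness}), substituting the strict inequality $\limsup I(u_n)<c_0^{\infty}$ for the full Palais-Smale condition and restricting every integration-by-parts manipulation to the admissible test functions $\psi\in{\rm C}^\infty(\Omega)\cap W^{1,\infty}(\Omega)$. Taking $\psi\equiv 1$ in (\ref{compactness}) first gives $I'(u_n)u_n\to 0$; combined with the upper bound on $I(u_n)$ the standard energy-Nehari manipulation shows $(u_n)$ is bounded in $\hoz$. Passing to a subsequence along which $I(u_n)$ converges, I introduce the concentration-compactness measures $|\nabla(u_n-u)|^2\wk\mu$ and $|u_n-u|^p\wk\nu$ on $\Omega$ and the asymptotic quantities $\mu_\infty,\nu_\infty,\eta_\infty$ of Step~3. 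Plugging $\psi\in{\rm C}_c^\infty(\Omega)$ into (\ref{compactness}) and subtracting $I'(u)(u\psi)=0$ reproduces (\ref{equal-local-mass}), i.e.\ $\|\mu\|=\|\nu\|$ (so $\|\mu\|\geqslant S^{N/2}$ if $\mu\neq 0$); plugging in $\psi=\psi_R$ from Step~3 reproduces (\ref{equal-mass-at-infinity}), $\mu_\infty=\nu_\infty+\eta_\infty$; and the Sobolev inequality at infinity gives $\mu_\infty\geqslant S\nu_\infty^{2/p}$.

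The Brezis-Lieb splittings of Step~4 then yield the energy decomposition
$$\lim_n I(u_n)=I(u)+\left(\tfrac12-\tfrac1p\right)\|\mu\|+\left(\tfrac12-\tfrac1p\right)\nu_\infty+\left(\tfrac12-\tfrac1q\right)\eta_\infty.$$
Since $I'(u)u=0$ one has $I(u)=(\tfrac12-\tfrac1p)\int|u|^p+(\tfrac12-\tfrac1q)\int|u|^q\geqslant 0$; together with the strict bound $\lim_n I(u_n)<c_0^{\infty}\leqslant S^{N/2}/N$ this forces $(\tfrac12-\tfrac1p)\|\mu\|<S^{N/2}/N$, hence $\|\mu\|=0$ and $u_n\to u$ strongly in ${\rm H}^1_{{\rm loc}}(\Omega)$. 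It remains to show $\mu_\infty=0$.

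In case (H)$_0$ the cross-section $\Omega^t\subset B_{\xR^{N-\ell}}(0,\delta)$ shrinks at infinity, so Poincar\'e's inequality slice-by-slice gives $\int_{|x|>R}u_n^2\leqslant C\delta^2$ uniformly in $n$ for $R$ large (depending on $\delta$); interpolating between $L^2$ and $L^p$ then forces $\eta_\infty=0$, and combining $\mu_\infty=\nu_\infty\geqslant S\nu_\infty^{2/p}$ with the strict bound $\nu_\infty/N\leqslant\lim_n I(u_n)<S^{N/2}/N$ forces $\nu_\infty=0$. In case (H), I adapt Step~8 directly: for $R$ large enough the truncation $v_n:=u_n\psi_R$ lies in ${\rm H}^1_0(\widehat{F_\delta})$ because $\Omega\cap\{|x|>R\}\subset\widehat{F_\delta}$, and the strong ${\rm H}^1_{{\rm loc}}$-convergence $u_n\to u$ combined with Brezis-Lieb produces $I(v_n)\to(\tfrac12-\tfrac1p)\nu_\infty+(\tfrac12-\tfrac1q)\eta_\infty$ and $I'(v_n)v_n\to 0$ in the iterated limit $n\to\infty$, $R\to\infty$. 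If $\nu_\infty>0$, the Nehari rescaling $w_n=t_nv_n$ in ${\rm H}^1_0(\widehat{F_\delta})$ yields $t_n\to 1$ and $I(w_n)\geqslant c_0(\widehat{F_\delta})$; passing $\delta\to 0$ and invoking $c_0(\widehat{F_\delta})\to c_0(\widehat{F})=c_0^{\infty}$ from Step~6 then forces $(\tfrac12-\tfrac1p)\nu_\infty+(\tfrac12-\tfrac1q)\eta_\infty\geqslant c_0^{\infty}$, which together with $I(u)\geqslant 0$ contradicts the strict inequality in the energy identity. Hence $\nu_\infty=0$; interpolation with $L^p$ gives $\eta_\infty=0$; and $\mu_\infty=0$. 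The main delicacy is verifying that $v_n$ indeed belongs to ${\rm H}^1_0(\widehat{F_\delta})$ and carefully threading the three limits $n\to\infty$, $R\to\infty$, $\delta\to 0$.
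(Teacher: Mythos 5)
Your proof is correct, but you take a structurally different route from the paper. You re-run the entire concentration-compactness machinery of Proposition~\ref{c0-is-attained} on $(u_n)$ itself, with the general (possibly nonzero) weak limit $u$ carried along through Steps~2--8. This forces you to adapt Step~8 of that proof to $u\neq 0$: you must verify the iterated limits $\limsup_{R}\limsup_n$ of the energies of the truncations $v_n=u_n\psi_R$ (peeling off the $u$-contribution, which vanishes as $R\to\infty$ since $u\in{\rm H^1_0}$), handle the case $\nu_\infty=0$, $\eta_\infty>0$ by Poincar\'e-plus-interpolation in the cylinder $\widehat{F_\delta}$, and justify $v_n\in{\rm H}^1_0(\widehat{F_\delta})$. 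You flag these points yourself, and they do all go through. The paper sidesteps all of this with a single shift: setting $w_n:=u_n-u$, the Brezis--Lieb lemma gives $I(w_n)=I(u_n)-I(u)+{\rm o}(1)$ and $I'(w_n)(w_n\psi)\to 0$, and crucially $w_n\rightharpoonup 0$. Since Steps~7--8 of Proposition~\ref{c0-is-attained} are stated precisely under the contradiction hypothesis ``weak limit $=0$,'' they apply verbatim to $(w_n)$ and show $I(w_n)\to 0$, whence $\|w_n\|\to 0$. In short, your version is a valid re-derivation from scratch; the paper's version buys reuse of the zero-weak-limit case without modification, at no loss of generality, and is therefore shorter and less error-prone. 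One minor redundancy on your side: deducing boundedness of $(u_n)$ from $I'(u_n)u_n\to 0$ and the energy bound is unnecessary, since weak convergence $u_n\rightharpoonup u$ is already part of the hypotheses.
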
 

\begin{proof}  Since $I'(u)u=0$, we have that $I(u)\geqslant 0$. Denote $v_n:=u_n-u$. By the Brezis-Lieb Lemma,
$$
I(v_n)=I(u_n)-I(u)+{\rm o}(1) <  c_0^{\infty}+{\rm o}(1) $$
and
$$
I'(v_n)(v_n\psi)=I'(u_n)(u_n\psi)-I'(u)(u\psi)+{\rm o}(1)\to 0$$
for every $\psi \in {\rm C}^{\infty}(\Omega)\cap  W^{1,\infty}(\Omega)$. Since $(v_n)$ converges weakly to zero,
a similar (though easier) argument as in the proof of Proposition \ref{c0-is-attained} shows that we cannot
have $\limsup I(v_n) >0$. Thus 
$
I(v_n)\to 0.$
Since also $I'(v_n)v_n \to 0$, we conclude that $||v_n||\to 0$, hence $u_n\to u$ in ${\rm H_0^1}(\Omega)$.\end{proof}

Next we turn to the proof of Theorem 2. Following \cite{CSS}, let
\begin{equation}\label{c1}
c_1:=\inf\{ I(u): \; u\in {\rm H_0^1}(\Omega),\; u^{\pm}\neq 0 \; {\rm and} \; I'(u^{\pm})u^{\pm}=0\}\geqslant
c_0 >0, 
\end{equation}
where we denote $u^{+}=\max\{u,0\}$ and $u^{-}=\max\{-u,0\}$. The following proposition will be proved in
Section 3 (cf.\ Propositions \ref{c1-estimate-(H)'} and \ref{c1-estimate-(H)0}).

\begin{prpstn}\label{c1-estimate} Assume {\rm (H)$'$} or {\rm (H)$_0$} holds; in the latter case, we also
assume that  
$q>(N+2)/(N-2)$. Then
$$
c_1 < c_0 + c_0^{\infty}.$$ \end{prpstn}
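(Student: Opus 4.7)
My plan is to estimate $c_1$ from above by constructing an explicit sign changing trial function of the form $v = s u_0 - t w$, where $u_0 > 0$ is the least energy positive solution of (P) provided by Proposition \ref{c0-is-attained} (so $I(u_0) = c_0$), and $w \geq 0$ is a localized bump whose energy approximates $c_0^{\infty}$ and which is essentially decoupled in space from $u_0$. Standard Nehari arguments then give the bound $c_1 \leq \sup_{s,\,t \geq 0} I(s u_0 - t w)$, so the proof reduces to showing
\[
\sup_{s,\,t \geq 0} I(s u_0 - t w) \;<\; c_0 + c_0^{\infty}.
\]

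The natural choice of $w$ differs in the two cases. Under (H)$'$, I take $w = w_{\tau}$ to be a translate of the least energy positive solution $\omega \in H^1_0(\widehat F)$ along the cylinder direction by $\tau$, combined with the dilation $y \mapsto y/(1 + a/|\tau|^m)$ that is made admissible by the assumption $(1 + a/|t|^m)F \subset \Omega^t$ for large $|t|$; this produces $w_{\tau} \in H^1_0(\Omega)$ with $I(w_{\tau}) \to c_0^{\infty}$ as $|\tau| \to \infty$, and exponential decay of $\omega$ and $u_0$ in the cylinder direction makes all interaction integrals $\int u_0^k w_{\tau}^{\ell}$ exponentially small in $|\tau|$. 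Under (H)$_0$, I take $w = \eta\,U_{\eps, x_0}$, where $U_{\eps, x_0}$ is the Aubin--Talenti bubble concentrated at an interior point $x_0 \in \Omega$, $\eta$ a smooth cutoff, and $\eps \to 0$; classical Br\'ezis--Nirenberg computations give $\max_{t \geq 0} I(tw) = S^{N/2}/N - c\,\mu\,\eps^{N - q(N-2)/2} + O(\eps^{N-2})$ with $c > 0$.

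The key computation is the energy expansion
\[
I(s u_0 - t w) = I(s u_0) + I(t w) - s t \!\int\! \nabla u_0 \cdot \nabla w + R(s, t),
\]
where integration by parts together with the equation for $u_0$ gives $\int \nabla u_0 \cdot \nabla w = \int (u_0^{p-1} + \mu u_0^{q-1}) w \geq 0$, so the kinetic cross contributes a strict gain, and $R(s, t)$ gathers the corrections from expanding $|s u_0 - t w|^p$ and $|s u_0 - t w|^q$, bounded by interaction integrals $\int u_0^j w^k$ with $j + k \in \{p, q\}$. Maximizing in $s$ and $t$ separately, $\max_s I(s u_0) = c_0$ and $\max_t I(tw) < c_0^{\infty}$ by a quantifiable amount (the subcritical gap); the desired strict inequality follows once one shows that this gap is not undone by the interaction terms hidden in the kinetic cross and in $R$. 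That the supremum is attained on a bounded set is immediate from $I(s u_0 - t w) \to -\infty$ as $s + t \to \infty$.

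The principal obstacle is this final scale matching. In case (H)$_0$ the interaction integrals $\int u_0^j(\eta U_{\eps})^k$ are of order $\eps^{(N-2)/2}$, while the subcritical gap is of order $\eps^{N - q(N-2)/2}$; the inequality $N - q(N-2)/2 < (N-2)/2$ is equivalent to $q > (N+2)/(N-2)$, which is exactly the hypothesis imposed in the statement. In case (H)$'$ the matching is between exponentially small interactions and an algebraic gap of order $|\tau|^{-m}$ coming from the mild dilation, so it is comfortable once the leading-order asymptotic of each piece is identified. After this scale analysis and a careful check of the signs of the cross terms at the bounded maximizer, the strict inequality $c_1 < c_0 + c_0^{\infty}$ drops out.
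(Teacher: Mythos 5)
Your proposal takes the interference route (after Zhu): keep $u_0$ and $w$ overlapping, expand $I(su_0-tw)$, and show that the subcritical gain in $I(tw)$ dominates the cross terms. The paper explicitly departs from this — see the remark opening the proof of Proposition~\ref{c1-estimate-(H)0} — by \emph{cutting down} both the least-energy solution and the model profile so that they have \emph{disjoint} supports, and then estimating the cutoff errors instead of the interference. With disjoint supports one has $I(\tau_1 v_R - \tau_2\psi_R) = I(\tau_1 v_R)+I(\tau_2\psi_R)$ exactly, $(\tau_1 v_R-\tau_2\psi_R)^\pm = \tau_1 v_R,\ \tau_2\psi_R$, and the nodal Nehari constraint decouples into two one-dimensional intermediate-value-theorem problems (Step~4 of Proposition~\ref{c1-estimate-(H)'}).

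Your route is workable in principle, but as written it has two genuine gaps. First, the inequality $c_1 \le \sup_{s,t\ge 0}I(su_0-tw)$ is not a ``standard Nehari argument'' when $\operatorname{supp}u_0\cap\operatorname{supp}w\ne\emptyset$: the set over which $c_1$ is infimized in (\ref{c1}) requires $I'(u^\pm)u^\pm=0$, and $(su_0-tw)^+$, $(su_0-tw)^-$ are \emph{not} $su_0$ and $tw$ once the supports overlap. One must either project onto the nodal Nehari set via the genuine positive and negative parts and show the overlap perturbs the energy only at higher order — delicate, because the kinetic cross $-st\int\nabla u_0\cdot\nabla w=-st\int(u_0^{p-1}+\mu u_0^{q-1})w<0$ and the nonlinear cross coming from $|su_0-tw|^p\le (su_0)^p+(tw)^p$ enter $I$ with \emph{opposite} signs and at the same order — or simply truncate so the supports are disjoint. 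Second, in case (H)$'$ the $w_\tau$ you build by translating and dilating $\omega$ is supported in $\xR^\ell\times\lambda_R F$, but (H)$'$ only guarantees $\lambda_R F\subset\Omega^t$ for $|t|$ large; without the $t$-direction cutoff $\eta_R$ your $w_\tau$ need not lie in $H^1_0(\Omega)$. The paper's $\psi_R$ carries $\eta_R$ precisely for this reason, and the constants $M$ and $\lambda_R$ in (\ref{definition-of-M})--(\ref{definition-of-lambda-R}) are calibrated so that the dilation is admissible where $\eta_R\ne 0$. Your scale analysis (exponential versus $|\tau|^{-m}$ under (H)$'$; $\eps^{(N-2)/2}$ versus $\eps^{N-q(N-2)/2}$ under (H)$_0$) is correct and isolates the right threshold $q>(N+2)/(N-2)$, but the cleanest repair of both gaps is to add cutoffs to kill the overlap — at which point you land exactly on the paper's proof.
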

 
Our final result completes the proof of Theorem 2.

\begin{prpstn}\label{c1-is-attained} Assume {\rm (H)$'$} or {\rm (H)$_0$} holds; in the latter case, we also
assume that  
$q>(N+2)/(N-2)$.  Then the infimum in {\rm (\ref{c1})} is
attained in a critical point of $I$.\end{prpstn}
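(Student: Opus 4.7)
The plan is to adapt the concentration-compactness analysis of Proposition \ref{c0-is-attained} to the nodal setting, with the strict inequality $c_1 < c_0+c_0^{\infty}$ of Proposition \ref{c1-estimate} playing the role that $c_0<S^{N/2}/N$ played there. The natural constraint is the ``nodal Nehari'' set $\mathcal{M}:=\{u\in {\rm H_0^1}(\Omega) : u^{\pm}\neq 0 \text{ and } I'(u^{\pm})u^{\pm}=0\}$; since $u^+$ and $u^-$ have disjoint supports, the two constraints decouple and $I(u) = I(u^+) + I(u^-)$. For $u$ with $u^{\pm}\neq 0$ there is a unique pair $(t_+,t_-)\in(0,\infty)^2$ such that $t_+u^+ - t_-u^-\in\mathcal{M}$, and it realizes $\max_{s,t>0} I(tu^+-su^-)$. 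A standard application of Ekeland's variational principle on $\mathcal{M}$ (a natural $C^1$ constraint for $I$) produces a sequence $(u_n)\subset \mathcal{M}$ with $I(u_n)\to c_1$ and $I'(u_n)\to 0$ in ${\rm H}^{-1}(\Omega)$; the Nehari identities force $(u_n)$ to be bounded in ${\rm H_0^1}(\Omega)$ with $\liminf ||u_n^{\pm}|| >0$ and $\liminf \int |u_n^{\pm}|^p >0$. Passing to a subsequence, $u_n\rightharpoonup u$ weakly in ${\rm H_0^1}(\Omega)$, $u_n\to u$ a.e., and $I'(u)=0$.

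The crux is to rule out $u^+=0$ or $u^-=0$. Assume $u^+=0$, the opposite case being symmetric. Since $u_n^+\rightharpoonup 0$, $\liminf ||u_n^+||>0$, and (by disjointness of supports) $I'(u_n^+)(u_n^+\psi)=I'(u_n)(u_n^+\psi)={\rm o}(1)$ for every admissible $\psi$, the concentration-compactness scheme of Steps~2--8 of Proposition \ref{c0-is-attained} applies to $(u_n^+)$ as an approximate Nehari critical point in its own right: either mass concentrates in the interior, forcing $\liminf I(u_n^+) \geqslant S^{N/2}/N \geqslant c_0^{\infty}$, or mass escapes to infinity, in which case the truncation argument of Step~8 on $\widehat{F_{\delta}}$ yields $\liminf I(u_n^+) \geqslant c_0(\widehat{F_{\delta}})\to c_0^{\infty}$ as $\delta\to 0$; in case {\rm (H)}$_0$ either scenario directly gives $\liminf I(u_n^+) \geqslant S^{N/2}/N = c_0^{\infty}$. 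For $(u_n^-)$: if $u^-\neq 0$ then $u^-$ lies in the Nehari set, so Fatou applied to the representation $I(v)=\frac{1}{N}\int|v|^p+(\frac{1}{2}-\frac{1}{q})\int|v|^q$ (valid on the Nehari set) gives $\liminf I(u_n^-)\geqslant I(u^-)\geqslant c_0$; if $u^-=0$ as well, the same concentration-compactness analysis yields $\liminf I(u_n^-)\geqslant c_0^{\infty}\geqslant c_0$. Adding,
$$c_1=\lim I(u_n)=\lim\bigl(I(u_n^+)+I(u_n^-)\bigr)\geqslant c_0+c_0^{\infty},$$
contradicting Proposition \ref{c1-estimate}. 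Hence $u^{\pm}\neq 0$, so $u\in\mathcal{M}$ and $I(u)\geqslant c_1$; the same Fatou argument gives the reverse inequality, so $u$ attains $c_1$ and is the sign-changing critical point sought.

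The main obstacle is the splitting step above: one must apply the concentration-compactness analysis of Proposition \ref{c0-is-attained} to each sign part \emph{separately} and quantify that any loss of compactness of one sign part costs at least $c_0^{\infty}$ in the limit energy, while the surviving sign part contributes at least $c_0$. The strict inequality $c_1 < c_0+c_0^{\infty}$ provided by Proposition \ref{c1-estimate} is precisely what makes these two contributions incompatible and closes the argument.
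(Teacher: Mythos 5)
Your proof is correct, and the main ingredients --- a Palais--Smale sequence on the nodal Nehari set, the concentration-compactness alternative, and the strict inequality $c_1 < c_0 + c_0^{\infty}$ from Proposition~\ref{c1-estimate} --- are exactly the ones the paper uses. The route, however, is genuinely different. The paper invokes the Cerami--Solimini--Struwe construction to obtain a Palais--Smale sequence at level $c_1$ with the \emph{additional} property $I(u_n^{\pm})\geqslant c_0 + {\rm o}(1)$; combined with $I(u_n)=I(u_n^+)+I(u_n^-)\to c_1 < c_0+c_0^{\infty}$ this immediately gives $\limsup I(u_n^{\pm})<c_0^{\infty}$, and then the pre-established compactness lemma (Proposition~\ref{compactness-lemma}) applies \emph{directly} to each sign part, yielding strong convergence $u_n^{\pm}\to u^{\pm}$ and hence $I(u)=c_1$, $I(u^{\pm})\geqslant c_0>0$ in one stroke. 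You instead argue by contradiction (assume $u^+=0$) and re-run the concentration-compactness analysis inline on $u_n^+$ to force $\liminf I(u_n^+)\geqslant c_0^{\infty}$, pairing it with a Fatou/Nehari lower bound $\liminf I(u_n^-)\geqslant c_0$ to contradict the strict inequality. That works, but it essentially re-proves the compactness lemma rather than applying it, and it delivers only $u\in\mathcal{M}$ with $I(u)=c_1$ rather than strong $H_0^1$-convergence of the whole sequence (not needed for the theorem, but cleaner). Two small cautions: (i) Ekeland's principle on the nodal Nehari set $\mathcal{M}$ is not as ``standard'' as you suggest, since $\mathcal{M}$ is not a smooth manifold --- producing a genuine Palais--Smale sequence with the right lower bounds on the two sign parts is exactly what the cited Cerami--Solimini--Struwe argument supplies; (ii) the $\liminf$-versus-$\limsup$ bookkeeping when you extract the estimate $\liminf I(u_n^+)\geqslant c_0^{\infty}$ from the compactness alternative deserves a line of justification (it holds because the alternative applies to every subsequence), though it does come out right.
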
 

\begin{proof} It is known (cf.\ \cite{CSS}) that there exists a Palais-Smale sequence at level $c_1$, namely
\begin{eqnarray*}
I(u_n)\to c_1 \qquad \mbox{ and } \qquad  I'(u_n)\to 0,
\end{eqnarray*}
with the additional property that
\begin{equation}\label{nontrivial.pos.part}
I(u_n^{\pm})\geqslant c_0 +{\rm o}(1)
\end{equation}
(so that, in fact, $c_1 \geqslant 2c_0$). As in Step~1 in the proof of Proposition \ref{c0-is-attained},
modulo a subsequence, $(u_n)$ converges weakly in ${\rm H_0^1}(\Omega)$ and pointwise a.e.\ to a critical point
$u$ of $I$.  Observe that
$I'(u_n)\to 0$ implies that
\begin{equation}\label{PS.for.pos.part}
I'(u_n^{\pm})(u_n^{\pm}\psi)=I'(u_n)(u_n^{\pm}\psi)\to 0
\end{equation}
for every $\psi \in {\rm C}^{\infty}(\Omega)\cap  W^{1,\infty}(\Omega)$. Similarly,
$I'(u^{\pm})(u^{\pm}\psi)=0$. Since moreover $I(u_n)=  I(u_n^{+})+I(u_n^{-})=c_1+{\rm o}(1)$, we deduce
from (\ref{nontrivial.pos.part}) and Proposition \ref{c1-estimate} that
\begin{equation}\label{subcritical.pos.part}
\limsup I(u_n^{\pm})<c_0^{\infty}.
\end{equation}
It follows from (\ref{PS.for.pos.part}), (\ref{subcritical.pos.part}) and Proposition
\ref{compactness-lemma} that
$u_n^{\pm}\to u^{\pm}$ in ${\rm H_0^1}(\Omega)$. Hence $u_n \to u$ in ${\rm H_0^1}(\Omega)$, $I(u)=c_1$ and 
$I(u^{\pm})\geqslant c_0>0$. This finishes the proof.\end{proof}

\setcounter{equation}{0}

\section{Decay and energy estimates}

This section is devoted to general equations of the form
\begin{equation}\label{general-equation}
-\Delta u -\lambda u = g(u), \qquad u\in {\rm H^1_0}(\Omega),
\end{equation}
where $\Omega\subset \xR^N$ ($N\geqslant 3$) is an open set with ${\rm C}^{1,1}$ boundary and $g$ satisfies
(recall that $ p=2^*=2N/(N-2)$)
\begin{equation}\label{growth-on-g}
|g(s)|\leqslant C\;(|s|+|s|^{p-1}),\qquad \forall s\in \xR.
\end{equation}
Under assumption (\ref{growth-on-g}), it follows from the Brezis-Kato estimates and classical elliptic
regularity theory that the solutions of (\ref{general-equation}) lie in
${\rm C^2}(\Omega)\cap {\rm L^{\infty}}(\Omega)\cap {\rm C}(\overline{\Omega})$. In view of the applications that we have in
mind (cf.\ assumptions (H)-(H)$_0$), we let $\xR^N=\xR^{\ell}\times \xR^{N-\ell}$
with
$1\leqslant \ell < N$ and accordingly write $(t,y)\in \xR^{\ell}\times \xR^{N-\ell}$ for any point $(t,y)\in
\xR^N$.

\begin{prpstn}\label{decay} Let $\Om=\xR^{\ell}\times F$ where $F\subset\xR^{N-\ell}$ is a ${\rm C^{1,1}}$ domain
and let 
$g\in {\rm C^1}(\xR)$ satisfy {\rm (\ref{growth-on-g})}, $g(0)=0$ and $g'(s)=\mbox{o}(s^\eps)$ near 0, for some
$\eps>0$. Let $u$ be a solution of
\begin{equation}\label{general-equation-bis}
-\Delta u -\lambda u= g(u), \qquad u\in {\rm H^1_0}(\Omega),
\end{equation}
where $\lambda<\lambda_1$  and $\lambda_1$ is the first eigenvalue 
of $(-\Delta,{\rm H^1_0}(F))$. Then
\begin{equation}\label{eigendecay}
|u(t,y)|+|\nabla_tu(t,y)|\leqslant \phi(y)e^{-\sqrt{1+(\lambda_1-\lambda)|t|^2}},\qquad \forall(t,y)\in\Om,
\end{equation}
where  $\phi$ is a positive eigenfunction associated to $\lambda_1$.  Also, there exists a
constant $C>0$ such that
$$
|\nabla u(t,y)|\leqslant  C e^{-\sqrt{1+(\lambda_1-\lambda)|t|^2}},\qquad \forall(t,y)\in\Om.
$$
\end{prpstn}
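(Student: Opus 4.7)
My plan is a comparison argument against a supersolution of the form $W(t,y)=M\phi(y)\Psi(t)$ with $\Psi(t):=e^{-\sqrt{1+(\lambda_1-\lambda)|t|^2}}$, executed after a bootstrap that exploits the super-linear behavior of $g$ at $0$. The $t$-gradient bound will follow by applying the same scheme to $\partial_{t_i}u$, while the full gradient bound will come from a local Schauder estimate.

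First I would show that $u(t,y)\to 0$ uniformly as $|t|\to\infty$. By (\ref{growth-on-g}) and Brezis--Kato, $u\in L^\infty(\Om)$. Writing (\ref{general-equation-bis}) as $-\Delta u=V(t,y)u$ with $V:=\lambda+g(u)/u$ bounded, a standard $L^\infty$--$L^2$ estimate (Moser iteration, uniform up to the $C^{1,1}$ boundary on unit balls) gives $\|u\|_{L^\infty(\Om\cap B_1(z))}\leq C\|u\|_{L^2(\Om\cap B_2(z))}$, and the right-hand side tends to $0$ as $|t_0|\to\infty$ because $u\in L^2(\Om)$ and the cross-section $F$ is bounded. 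The hypothesis $g'(s)=o(|s|^\eps)$ integrates to $|g(s)|\leq C|s|^{1+\eps}$ near $0$; in particular, for any $\eta>0$, $|g(u(t,y))|\leq\eta|u(t,y)|$ once $|t|\geq R_\eta$. For $\eta$ small enough that $\lambda+\eta<\lambda_1$, Kato's inequality gives $-\Delta|u|-(\lambda+\eta)|u|\leq 0$ on $\{|t|\geq R_\eta\}$, and the crude barrier $W_0:=M\phi(y)e^{-\alpha|t|}$ with $\alpha:=\sqrt{\lambda_1-\lambda-\eta}$ is a supersolution of the same operator on $\{|t|\geq 1\}$. Taking $M$ large enough (using Hopf on $\phi$ to dominate $u$ on $\{|t|=R_\eta\}\cap\overline{\Om}$), the weak maximum principle on $\{|t|\geq R_\eta\}\cap\Om$ yields a preliminary bound $|u(t,y)|\leq M\phi(y)e^{-\alpha|t|}$.

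The sharp bound will come by feeding this back. Combining $|g(s)|\leq C|s|^{1+\eps}$ with the preliminary estimate one has $|g(u(t,y))|\leq C'\phi(y)^{1+\eps}e^{-(1+\eps)\alpha|t|}$. A direct computation with $\rho:=\sqrt{1+(\lambda_1-\lambda)|t|^2}$ shows
\begin{equation*}
-\Delta W-\lambda W \;=\; M\phi(y)\Psi(t)\,(\lambda_1-\lambda)\Bigl(\tfrac{1}{\rho^2}+\tfrac{1}{\rho^3}+\tfrac{\ell-1}{\rho}\Bigr)\;>\;0.
\end{equation*}
If $\eta$ was chosen so small that $(1+\eps)\alpha>\sqrt{\lambda_1-\lambda}$, then $|g(u)|/W$ decays exponentially and is dominated by the polynomial-margin right-hand side above for $|t|$ large; hence $-\Delta W-\lambda W\geq|g(u)|\geq-\Delta|u|-\lambda|u|$ on $\{|t|\geq R\}$. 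Choosing $M$ once more to match boundary values on $\{|t|=R\}$, the weak maximum principle forces $|u|\leq W$, and absorbing $M$ into the normalization of $\phi$ gives the claimed bound on $u$. The same three-step scheme applies to $\partial_{t_i}u$, which by translation invariance of $\Om$ satisfies $-\Delta(\partial_{t_i}u)-(\lambda+g'(u))\partial_{t_i}u=0$ with zero Dirichlet data on $\partial\Om=\xR^\ell\times\partial F$; the needed linear coefficient $g'(u)\to 0$ uniformly by the first step.

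For the full gradient, I would invoke interior and boundary $C^{1,\al}$ Schauder estimates (uniform on unit balls thanks to the $C^{1,1}$ hypothesis) to obtain $|\nabla u(z)|\leq C\|u\|_{L^\infty(\Om\cap B_1(z))}+C\|g(u)\|_{L^\infty(\Om\cap B_1(z))}\leq C'\|u\|_{L^\infty(\Om\cap B_1(z))}$, which inherits the exponential decay already proved for $u$ (the $\phi(y)$ factor being replaced by $\|\phi\|_\infty$). The main obstacle is the bootstrap step: the natural supersolution $W$ has only \emph{polynomial} margin $\sim|t|^{-1}$ or $|t|^{-2}$ at infinity, so a genuinely linear perturbation of $\Delta+\lambda$ could not be absorbed; this is precisely where the hypothesis $g'(s)=o(|s|^\eps)$ enters, buying the extra exponential factor that allows $W$ to win.
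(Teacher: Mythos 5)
Your proof is correct and follows essentially the same three-stage comparison scheme as the paper: uniform decay of $u$ from elliptic regularity, a preliminary exponential bound $\lesssim\phi(y)e^{-\alpha|t|}$ with $\alpha<\sqrt{\lambda_1-\lambda}$ by comparison with a supersolution of $-\Delta-(\lambda+\eta)$, and a bootstrap using $|g(u)|\lesssim|u|^{1+\eps}$ to beat the merely polynomial margin of the sharp barrier $\phi(y)e^{-\sqrt{1+(\lambda_1-\lambda)|t|^2}}$, with the $t$-derivatives handled identically and the full gradient obtained from a local elliptic estimate. The technical choices that differ (Moser iteration in place of the $W^{2,\alpha}$ bound from Gilbarg--Trudinger, Kato's inequality for $|u|$ in place of separate comparisons for $\pm u$, and $e^{-\alpha|t|}$ in place of $e^{-\sqrt{1+(\lambda_1-\mu)|t|^2}}$ as the preliminary barrier) are cosmetic.
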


\begin{proof}  1. Since $u\in {\rm L^{\infty}}(\Omega)$, we have from (\ref{growth-on-g}) that $|g(u(x))|\leqslant
c|u(x)|$ for every
$x\in
\Omega$. By elliptic regularity theory (Theorem 9.15 of \cite{GT}),
there exists $c>0$ such that, for all $\alpha \geqslant 2$, 
$$
||u||_{W^{2,\alpha}(B_1(0)\times F)}\leqslant c\;||u||_{{\rm L}^{\alpha}(B_2(0)\times F)}.
$$
Due to invariance by translations,
\begin{equation}\label{decum}
||u||_{W^{2,\alpha}(B_1(t)\times F)}\leqslant c\;||u||_{{\rm L}^{\alpha}(B_2(t)\times F)}\qquad\forall
t\in\xR^{\ell}.
\end{equation}
In particular, 
\begin{equation}\label{decdois}
u(t,y)\to 0\mbox{\ as\ }|t|\to+\infty,\mbox{\ uniformly for\ }y\in F
\end{equation}
and
\begin{equation}\label{decdoist}
|\nabla u(t,y)|\to 0\mbox{\ as\ }|t|\to+\infty,\mbox{\ uniformly for\ }y\in F.
\end{equation}
2. Suppose $\mu\in]\lambda,\lambda_1[$ is fixed and let
$$
\Psi(t):=\alpha e^{-\sqrt{1+(\lambda_1-\mu)|t|^2}}\in {\rm H^1}(\xR^{\ell}),
$$
where $\alpha$ will be chosen later.  An easy computation shows that
\begin{equation}\label{main-eigencomputation}
-\Delta\Psi+(\lambda_1-\mu)\Psi=(\lambda_1-\mu)\,\Psi\,((\ell -1)\theta^{-1/2}+\theta^{-1}+\theta^{-3/2})
\end{equation}
where $\theta(t):=1+(\lambda_1-\mu)|t|^2$. In particular,
$$
-\Delta\Psi+(\lambda_1-\mu)\Psi\geqslant\frac{\alpha(\lambda_1-\mu)}{1+(\lambda_1-\mu)|t|^2}
e^{-\sqrt{1+(\lambda_1-\mu)|t|^2}}=:h(t).
$$
Let $\phi$ be a positive eigenfunction associated to  $\lambda_1$
 and
$$
z(t,y):=\phi(y)\Psi(t).
$$
The function $z$ satisfies
$$
-\Delta z-\mu z\geqslant\phi(y)h(t).
$$
Hence, for $w:=z-u$, we have
\begin{equation}\label{upper-solution}
-\Delta w-\mu w\geqslant \phi(y)h(t)+(\mu-\lambda)u-g(u)=:k(t,y).
\end{equation}
Since $g(0)=0=g'(0)$, it follows from (\ref{decdois}) that if $u(t,y)\geqslant 0$, then
$$
(\mu-\lambda)u-g(u)\geqslant 0
$$
if $|t|>R$, where $R$ is chosen large; hence also  $k(t,y)\geqslant 0$.  In summary, 
\begin{equation}\label{maximum-principle}
w<0\Rightarrow-\Delta w-\mu w\geqslant 0,
\end{equation}
if $|t|>R$. Since $\partial z/\partial \nu =h\;\partial \varphi/\partial \nu<0$ ($\nu$ stands for the
outward normal to $\partial \Omega$), we can fix $\alpha$ so large that $w\geqslant 0$ for $|t|\leqslant R$.
Let $\omega:=\{x\in \Omega:
w(x)<0\}$. Since
$$
w^{-}(x)=0\qquad \forall x\in \partial \omega,
$$
by multiplying (\ref{upper-solution}) by $w^{-}$ and integrating, it follows from (\ref{maximum-principle})
that $\omega=\emptyset$. Therefore $u\leqslant z$.  In the same way we can prove that
$-u\leqslant z$, and so
\begin{equation}\label{decqua}
|u(t,y)|\leqslant\phi(y)e^{-\sqrt{1+(\lambda_1-\mu)|t|^2}},\qquad \forall(t,y)\in\Om;
\end{equation}
the constant $\alpha$ has been incorporated into the function $\phi$.\newline
3.  We now improve the previous estimate.  Since $g'(s)=o(s^\eps)$, there exists $C>0$
such that
\begin{equation}\label{deccin}
|g(u(t,y))|\leqslant C|u(t,y)|^{1+\eps},\qquad\forall(t,y)\in\Om.
\end{equation}
We fix $\mu\in]\lambda,\lambda_1[$, sufficiently close to $\lambda$, so that
$$
\gamma:=(1+\eps)\sqrt{\lambda_1-\mu}>\sqrt{\lambda_1-\lambda}.
$$
Combining (\ref{decqua}) and (\ref{deccin}), 
\begin{equation}\label{decseis}
|g(u(t,y))|\leqslant C\phi(y)^{1+\eps}e^{-\gamma|t|},\qquad\forall(t,y)\in\Om.
\end{equation}
Let $z(t,y):=\phi(y)\Psi(t)$, where $\Psi$ is like in Step~2, with $\mu$
replaced by $\lambda$.  For $w:=z-u$, we have
$$
-\Delta w-\lambda w\geqslant
\frac{\alpha(\lambda_1-\lambda)}{1+(\lambda_1-\lambda)|t|^2}\phi(y)
e^{-\sqrt{1+(\lambda_1-\lambda)|t|^2}}
-g(u(t,y))=:p(t,y).
$$
Since $\gamma>\sqrt{\lambda_1-\lambda}$, it follows from (\ref{decseis}) that $p(t,y)\geqslant 0$ if
$|t|$ is large. Choosing $\alpha$ sufficiently large leads to $p\geqslant 0$ in $\Om$.  We conclude 
from the maximum principle, as before, that $u\leqslant z$ in $\Om$ and in the same way, $|u|\leqslant z$ in
$\Om$. \newline
4. To finish the proof we use the decay of $u$.  Specifically, the derivatives
$v=\partial u/\partial t_i$, for $i=1,\ldots,\ell$, satisfy
$$
-\Delta v-\lambda v=g'(u)v\qquad\mbox{and\ }\; v\in {\rm H^1_0}(\Om).
$$
The argument in Steps~2 and~3 above proves an analogous decay for $v$. The main point in the final argument
is that if $\mu\in]\lambda,\lambda_1[$ is sufficiently close to $\lambda$ then
$$
\frac{\alpha(\lambda_1-\lambda)}{1+(\lambda_1-\lambda)|t|^2}\varphi(y)
e^{-\sqrt{1+(\lambda_1-\lambda)|t|^2}}
-C\varphi^{\varepsilon}(y)e^{-\varepsilon \sqrt{1+(\lambda_1-\lambda)|t|^2}}
\times \varphi(y) e^{-\sqrt{1+(\lambda_1-\mu)|t|^2}}$$
is positive for $|t|$ large. The final assertion in the statement of Proposition \ref{decay} follows from
(\ref{decum}).\end{proof}

We now consider the setting analyzed in Section 2. Again, we denote by $\lambda_1=\lambda_1(F)$
the first eigenvalue  of $(-\Delta,{\rm H^1_0}(F))$.

\begin{prpstn}\label{decay-for-(H)} Suppose $\Om$ is a domain satisfying assumption {\rm (H)} and moreover
that
 $\Omega$ is of class ${\rm C^{1,1}}$ in such a way that the local charts as
well as their inverses have uniformly bounded Lipschitz constants. Let $g\in {\rm C^1}(\xR)$ be as in\/
{\rm Proposition~\ref{decay}} and $u$ be a solution of
$$
-\Delta u-\lambda u=g(u),\qquad u\in {\rm H^1_0}(\Om),
$$
with $\lambda < \lambda_1$.
Then,
for each $\overline{\lambda}\in ]\lambda,\lambda_1[$,
there exists a constant $C>0$ such that
$$
|u(t,y)|+|\nabla u(t,y)|\leqslant C e^{-\sqrt{1+(\overline{\lambda}-\lambda)|t|^2}},\qquad
\forall(t,y)\in\Om.
$$
\end{prpstn}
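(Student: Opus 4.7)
The strategy is to adapt the proof of Proposition~\ref{decay} by comparing $u$ with a super-solution built on a slightly fatter cylinder $\widehat{F_{\delta}}$ instead of on $\widehat{F}$ itself. Since the bound we need is weaker than the one in Proposition~\ref{decay} (it has an arbitrary $\overline{\lambda} \in (\lambda, \lambda_1)$ rather than $\lambda_1$ itself in the exponent), only the analog of Step~2 of that proof is required. Step~1 of the proof of Proposition~\ref{decay} applies verbatim: the uniform $C^{1,1}$ hypothesis on the local charts makes the elliptic constants in Theorem~9.15 of \cite{GT} independent of the base point, giving a uniform estimate $\|u\|_{W^{2,\alpha}(B_1(x)\cap\Omega)} \leqslant C\|u\|_{L^{\alpha}(B_2(x)\cap\Omega)}$ for every $x \in \Omega$. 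Since $u \in L^p(\Omega)$, this forces both $u(t,y)$ and $\nabla u(t,y)$ to vanish uniformly in $y$ as $|t|\to\infty$.

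Fix $\overline{\lambda} \in (\lambda, \lambda_1)$. By domain continuity of the first Dirichlet eigenvalue under perturbations of the Lipschitz domain $F$, choose $\delta>0$ with $\lambda_1(F_\delta) > \overline{\lambda}$ and then $0<\delta'<\delta$ with $\overline{F_{\delta'}} \subset F_\delta$; by (H) there exists $R_0$ such that $\Omega^t \subset F_{\delta'}$ for every $|t| \geqslant R_0$. Let $\phi_\delta$ be a positive Dirichlet eigenfunction on $F_\delta$ for the eigenvalue $\lambda_1(F_\delta)$; by compactness $\phi_\delta \geqslant c_0 > 0$ on $\overline{F_{\delta'}}$, hence on $\overline{\Omega^t}$ for every $|t| \geqslant R_0$. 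Pick $\mu$ in the nonempty interval $\bigl(\lambda,\, \lambda + \lambda_1(F_\delta) - \overline{\lambda}\bigr)$ and set
$$
z(t,y) := \alpha\, \phi_\delta(y)\, e^{-\sqrt{1+(\lambda_1(F_\delta)-\mu)|t|^2}}, \qquad (t,y) \in \xR^{\ell} \times F_\delta.
$$
The computation in (\ref{main-eigencomputation}) (with $\lambda_1$ replaced by $\lambda_1(F_\delta)$) yields $-\Delta z - \mu z \geqslant 0$ on $\widehat{F_\delta}$.

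The maximum principle argument of Step~2 of Proposition~\ref{decay} then applies on $\Omega \cap \{|t| > R\}$ for $R \geqslant R_0$ large: the uniform smallness of $u$ together with $g'(0)=0$ gives $(\mu-\lambda)|u| - |g(u)| \geqslant 0$ for $|t|>R$, so $w := z-u$ satisfies $-\Delta w - \mu w \geqslant 0$ wherever $w<0$. The lower bound $\phi_\delta \geqslant c_0$ on $\overline{\Omega^t}$ lets me fix $\alpha$ so large that $w \geqslant 0$ on $\Omega \cap \{|t|=R\}$ (and trivially on $\partial\Omega \cap \{|t|>R\}$, where $u=0$). Testing with $w^{-}$ and repeating the argument against $-u$ gives $|u| \leqslant z$ on $\Omega \cap \{|t|>R\}$. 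Since $\phi_\delta$ is bounded on $F_\delta$ and $\lambda_1(F_\delta) - \mu > \overline{\lambda} - \lambda$ by the choice of $\mu$, this delivers $|u(t,y)| \leqslant C\, e^{-\sqrt{1+(\overline{\lambda}-\lambda)|t|^2}}$. The matching bound on $|\nabla u|$ is recovered by feeding this pointwise decay back into the uniform $W^{2,\alpha}$ estimate of Step~1 (with $\alpha>N$) and using Sobolev embedding $W^{2,\alpha} \hookrightarrow C^1$.

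The principal difficulty is producing a transverse profile that vanishes on $\partial F_\delta$ yet remains bounded below along $\overline{\Omega^t}$: an eigenfunction on $F_\delta$ a priori has no positive lower bound on $\Omega^t$ if $\Omega^t$ is allowed to touch $\partial F_\delta$. The double-neighbourhood device $\Omega^t \subset F_{\delta'} \subset\subset F_\delta$ delivers the required gap, at the price of trading the sharp rate $\lambda_1(F)-\lambda$ for $\lambda_1(F_\delta)-\lambda$; this is precisely why the statement quantifies over all $\overline{\lambda}<\lambda_1$ and why a Step~3-type improvement is not needed here.
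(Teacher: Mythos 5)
Your argument is correct and follows the paper's overall strategy (uniform elliptic estimates from the uniform $C^{1,1}$ hypothesis, comparison with a supersolution built from an eigenfunction on the fatter cross-section $F_\delta$, and continuity of $\lambda_1(F_\delta)$ as $\delta\to 0$), but you take a genuine shortcut. The paper first runs the Step~2 comparison to get the rate $\sqrt{\lambda_1(F_\delta)-\mu}$, then invokes the Step~3 bootstrap (which needs $g'(s)=o(s^\varepsilon)$) to push $\mu$ all the way down to $\lambda$, and finally lets $\delta\to 0$. You instead observe that since the target rate is only $\sqrt{\overline{\lambda}-\lambda}$ with $\overline{\lambda}<\lambda_1$ strict, one can fix $\delta$ with $\lambda_1(F_\delta)>\overline{\lambda}$ and then choose $\mu\in(\lambda,\,\lambda+\lambda_1(F_\delta)-\overline{\lambda})$ directly, so that $\lambda_1(F_\delta)-\mu>\overline{\lambda}-\lambda$ already after Step~2. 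This avoids the Step~3 bootstrap entirely (and, incidentally, only uses $g(0)=g'(0)=0$ rather than the full $g'(s)=o(s^\varepsilon)$ hypothesis). You also make explicit the point that one needs $\Omega^t\subset F_{\delta'}\subset\subset F_\delta$ for $|t|$ large, so that $\phi_\delta$ is bounded below by a positive constant on $\overline{\Omega^t}$ and $z>u=0$ on $\partial\Omega\cap\{|t|>R\}$; the paper's proof handles this implicitly under the phrase ``provided $R>0$ is sufficiently large.'' Both routes are valid; yours is slightly leaner.
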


\begin{proof} The proof is similar to that of Proposition \ref{decay}, so we just stress the differences. 
Thanks to our assumption on $\Omega$, the constant $c$ in (\ref{decum}) can be taken
uniformly bounded, hence (\ref{decdois}) still holds. Now, fix $\delta >0$ in such a way that $\lambda <
\lambda_1(F_{\delta})<\lambda_1$. Running through the argument in Step~2 of the proof of Proposition
\ref{decay} we see that, similarly to  (\ref{decqua}), 
$$
|u(t,y)|\leqslant\phi(y)e^{-\sqrt{1+(\lambda_1(F_{\delta})-\mu)|t|^2}},\qquad \forall(t,y)\in\Om, \,
|t|\geqslant R,
$$
provided $R>0$ is sufficiently large; here, $\mu \in ]\lambda,\lambda_1(F_{\delta})[$ and 
 $\phi$ is an eigenfunction associated to $\lambda_1(F_{\delta})$. Arguing as in Step~3 of the quoted
proof, the previous estimate for $u$ can be improved to
$$
|u(t,y)|\leqslant\phi(y)e^{-\sqrt{1+(\lambda_1(F_{\delta})-\lambda)|t|^2}},\qquad \forall(t,y)\in\Om, \,
|t|\geqslant R.
$$
This clearly implies that we can choose $C>0$ such that
\begin{equation}\label{first-decay}
|u(t,y)|\leqslant C e^{-\sqrt{1+(\lambda_1(F_{\delta})-\lambda)|t|^2}},\qquad \forall(t,y)\in\Om.
\end{equation}
A similar decay estimate for the derivatives of $u$ follows from (\ref{decum}) and (\ref{first-decay}).
Since $\lambda_1(F_{\delta})$ can be chosen arbitrarily close to $\lambda_1$
(see Lemma 2.3 of~\cite{RWW}), this proves the
proposition.\end{proof}

Going back to Proposition \ref{decay}, it may be interesting to observe that the asymptotic estimates can
be sharpened as follows.

\begin{prpstn}\label{optimal-decay} Under the assumptions of\/ {\rm Proposition~\ref{decay}}, let $u$ be a
solution of problem {\rm (\ref{general-equation-bis})}. Then:\\
(a) The conclusion of\/  {\rm Proposition~\ref{decay}} still holds with 
$e^{-\sqrt{1+(\lambda_1-\lambda)|t|^2}}$ replaced by
$e^{-\sqrt{1+(\lambda_1-\lambda)|t|^2}}\;|t|^{-\frac{\ell-1}{2}}$.\\ 
(b)  {\rm (Hopf lemma)} If $u$ is
positive and $\eta < \lambda$ then $u(t,y) \geqslant
\widetilde{\varphi}(y) e^{-\sqrt{1+(\lambda_1-\eta)|t|^2}}$ for every $(t,y)\in \Om$, for some
positive eigenfunction $\widetilde{\varphi}$  associated to $\lambda_1$.
\end{prpstn}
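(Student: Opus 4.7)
My plan is a bootstrap of the argument of Proposition~\ref{decay}. From the bound already shown, $|u(t,y)|\leq \phi(y)e^{-\sqrt{1+(\lambda_{1}-\lambda)|t|^{2}}}$, and the smallness $|g(s)|\leq C|s|^{1+\eps}$, one obtains an estimate on $g(u)$ that decays strictly faster than the exponential barrier used before. I would then rerun Step~2 of Proposition~\ref{decay} with a new barrier of the form $z(t,y):=\alpha\phi(y)\Psi(t)$, where $\Psi$ now carries the Bessel-type polynomial prefactor, e.g.
\[
\Psi(t):=(1+|t|^{2})^{-(\ell-1)/4}e^{-\sqrt{1+(\lambda_{1}-\lambda)|t|^{2}}}.
\]
The core identity is that for the asymptotic profile $\Psi_{0}(r):=r^{-(\ell-1)/2}e^{-mr}$ in $\xR^{\ell}$ with $m:=\sqrt{\lambda_{1}-\lambda}$, a direct computation gives
\[
-\Delta\Psi_{0}+m^{2}\Psi_{0}=\frac{(\ell-1)(\ell-3)}{4|t|^{2}}\,\Psi_{0},
\]
which is nonnegative for $\ell\geq 3$. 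The discrepancies between $\Psi$ and $\Psi_{0}$, and between $-\Delta\Psi+(\lambda_{1}-\lambda)\Psi$ and its asymptotic form, are also of order $\Psi/|t|^{2}$; all of this is dominated, for $|t|$ large, by the extra exponential factor $e^{-\eps\sqrt{1+(\lambda_{1}-\lambda)|t|^{2}}}$ in the estimate for $g(u)$. Choosing $\alpha$ large to control matters on the compact set $|t|\leq R$, the same maximum-principle comparison as in Step~2 of Proposition~\ref{decay} gives $|u|\leq z$ globally; the sharpened gradient estimate then follows from (\ref{decum}). In the exceptional dimension $\ell=2$, where the coefficient above is negative, one has to correct $\Psi$ by a higher-order factor (for instance $\Psi\cdot(1+\beta/|t|)$ with $\beta>0$ small); this low-dimensional adjustment is the main technical nuisance in~(a).

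\textbf{Part (b).} For the Hopf-type lower bound I choose an intermediate value $\eta<\eta'<\lambda$ and set
\[
z(t,y):=c\,\widetilde{\phi}(y)\,e^{-\sqrt{1+(\lambda_{1}-\eta)|t|^{2}}},
\]
where $\widetilde{\phi}>0$ is a first eigenfunction of $(-\Delta,{\rm H^{1}_{0}}(F))$ and $c>0$ is to be picked small. The structural observation is that for $|t|$ large, $u$ and $z$ satisfy \emph{opposite} differential inequalities with respect to the shifted operator $-\Delta-\eta'$: on the one hand, the upper decay of Proposition~\ref{decay} together with $|g(s)|\leq C|s|^{1+\eps}$ yields
\[
-\Delta u-\eta'u=(\lambda-\eta')u+g(u)\geq 0\qquad(|t|\geq R);
\]
on the other, applying (\ref{main-eigencomputation}) with $\mu$ replaced by $\eta$, and writing $\theta:=1+(\lambda_{1}-\eta)|t|^{2}$, one gets
\[
-\Delta z-\eta'z=z\bigl[(\lambda_{1}-\eta)\bigl((\ell-1)\theta^{-1/2}+\theta^{-1}+\theta^{-3/2}\bigr)-(\eta'-\eta)\bigr]\leq 0\qquad(|t|\geq R),
\]
for $R$ large, since the bracket is $O(1/|t|)-(\eta'-\eta)$. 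Thus on $D_{R}:=\{|t|>R\}\cap\widehat{F}$, $w:=z-u$ satisfies $-\Delta w-\eta'w\leq 0$, and $w=0$ on the lateral boundary $\xR^{\ell}\times\partial F$.

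It remains to handle the slice $|t|=R$ and the behavior at infinity. On the compact slice $\{|t|=R\}\cap\overline{\widehat{F}}$, Hopf's lemma for $u$ combined with the linear vanishing of $\widetilde{\phi}$ at $\partial F$ shows that $u/\widetilde{\phi}\geq c_{0}>0$, so fixing $c<c_{0}$ ensures $z\leq u$ on that slice. Both $u$ and $z$ decay exponentially, hence $w^{+}\in{\rm H^{1}_{0}}(D_{R})$. Testing the inequality on $D_{R}$ against $w^{+}$ and using the Poincar\'e inequality in the $y$-variable (valid because $\int|\nabla_{y}v|^{2}\geq\lambda_{1}\int v^{2}$ for $v\in{\rm H^{1}_{0}}(\widehat{F})$),
\[
\int_{D_{R}}|\nabla w^{+}|^{2}\leq \eta'\int_{D_{R}}(w^{+})^{2}\leq \frac{\eta'}{\lambda_{1}}\int_{D_{R}}|\nabla w^{+}|^{2},
\]
and since $\eta'<\lambda<\lambda_{1}$ this forces $w^{+}\equiv 0$, i.e.\ $z\leq u$ on $D_{R}$. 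On the complement $\{|t|\leq R\}\cap\widehat{F}$ the same Hopf argument, combined with $e^{-\sqrt{1+(\lambda_{1}-\eta)|t|^{2}}}\leq 1$, gives $z\leq u$ after possibly shrinking $c$ once more. The delicate point is the initial matching of the rates of $u$ and $\widetilde{\phi}$ at $|t|=R$ via Hopf, paralleling the low-dimensional subtlety already flagged in~(a).
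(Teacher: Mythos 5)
Your plan for part (b) is essentially the paper's argument: pick an intermediate level $\eta'\in\mbox{}]\eta,\lambda[\mbox{}$ (the paper calls it $\mu$), observe that for $|t|$ large the shifted operator sends $z=c\,\widetilde{\phi}(y)e^{-\sqrt{1+(\lambda_1-\eta)|t|^2}}$ below zero and $u$ above zero, match $u$ and $z$ on the compact slice by Hopf, and close by a maximum-principle/Poincar\'e argument; this is correct.

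Part (a), however, has a genuine gap. You compute the identity $-\Delta\Psi_0+m^2\Psi_0=\frac{(\ell-1)(\ell-3)}{4|t|^2}\Psi_0$ for the ``pure'' profile $\Psi_0=r^{-(\ell-1)/2}e^{-mr}$, and then treat the difference between this and $-\Delta\Psi+(\lambda_1-\lambda)\Psi$ for the actual barrier $\Psi$ (which has $\sqrt{1+m^2|t|^2}$ rather than $m|t|$ in the exponent) as an unexamined ``discrepancy of order $\Psi/|t|^2$.'' But that discrepancy is precisely what the argument lives or dies by, because it is of the \emph{same} order $\Psi/|t|^2$ as your leading term. The claim that it is ``dominated, for $|t|$ large, by the extra exponential factor $e^{-\eps\sqrt{\cdots}}$ in the estimate for $g(u)$'' is logically backwards: that exponential factor is much \emph{smaller} than $\Psi/|t|^2$, so it cannot control a term of that size. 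If you do carry out the computation for the actual $\Psi$ (as the paper does), you find
\[
-\Delta\Psi+(\lambda_1-\lambda)\Psi
=\Psi\left((\lambda_1-\lambda)\theta^{-1}+(\lambda_1-\lambda)\theta^{-3/2}
+\frac{(\ell-1)(\ell-3)}{4|t|^2}\right),\qquad \theta=1+(\lambda_1-\lambda)|t|^2,
\]
where the first two terms come exactly from the $\sqrt{1+m^2|t|^2}$ structure. These are positive and, since $(\lambda_1-\lambda)\theta^{-1}\sim |t|^{-2}$, the leading coefficient is $\frac14\bigl(4+(\ell-1)(\ell-3)\bigr)=\frac14\bigl((\ell-2)^2+3\bigr)>0$ for every $\ell\geqslant 1$. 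So the supersolution inequality holds for all dimensions and no correction factor $1+\beta/|t|$ is needed; the ``$\ell=2$ nuisance'' you flag is an artefact of discarding the $\theta^{-1}$ and $\theta^{-3/2}$ contributions. Once this is fixed, the rest of your comparison argument goes through as in Proposition~\ref{decay}.
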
 

\begin{proof} (a) We improve the estimate (\ref{eigendecay}) by repeating the argument with
$$
\Psi(t):=e^{-\sqrt{1+(\lambda_1-\lambda)|t|^2}}\;|t|^{-\frac{\ell-1}{2}}.$$
Indeed, 
$$
-\Delta \Psi + (\lambda_1-\lambda)\Psi=\Psi\;\left( (\lambda_1-\lambda)\theta^{-1}+
(\lambda_1-\lambda)\theta^{-3/2}+\frac{\ell - 1 }{2}\, \frac{\ell -3}{2}\, \frac{1}{|t|^2}\right),$$
a computation that can be easily checked using  (\ref{main-eigencomputation}); here, of course, 
$\theta(t):=1+(\lambda_1-\lambda)|t|^2$. As a consequence, for sufficiently large $|t|$ we have that 
$$
-\Delta \Psi + (\lambda_1-\lambda)\Psi\geqslant \frac{1}{2} \,
e^{-\sqrt{1+(\lambda_1-\lambda)|t|^2}}\;|t|^{-\frac{\ell+3}{2}}=:h(t).$$
Due to the assumptions on $g$, for the function on $w:=\alpha \varphi \Psi - u$, with $\alpha$ a fixed
positive number, we have
$$
-\Delta w - \lambda w \geqslant \alpha h(t)\varphi(y) - A
\varphi^{1+\varepsilon}(y)e^{-(1+\varepsilon)\sqrt{1+(\lambda_1-\lambda)|t|^2}}.$$
The right hand member above is positive for sufficiently large $|t|$. Using the maximum principle, we
conclude, as in (\ref{decqua}), that
\begin{equation}\label{optimal-for-u}
|u(t,y)|\leqslant \alpha \varphi(y) e^{-\sqrt{1+(\lambda_1-\lambda)|t|^2}}\;|t|^{-\frac{\ell-1}{2}},\qquad
\forall(t,y)\in\Om.
\end{equation}
Finally, as in Step~4 of the quoted proof, a similar estimate for the derivatives of $u$ follows from
(\ref{eigendecay}), (\ref{optimal-for-u}) and the fact that
\begin{eqnarray*}
&&\frac{\alpha}{2}\varphi(y)
e^{-\sqrt{1+(\lambda_1-\lambda)|t|^2}}|t|^{-\frac{\ell +3}{2}}-\\
&&\qquad\qquad\qquad-C\varphi^{\varepsilon}(y)e^{-\varepsilon \sqrt{1+(\lambda_1-\lambda)|t|^2}}|t|^{-\varepsilon \frac{\ell
-1}{2}}
\times \varphi(y) e^{-\sqrt{1+(\lambda_1-\lambda)|t|^2}}
\end{eqnarray*}
is positive for $|t|$ large.\\
(b) Here we let $
\Psi(t):=e^{-\sqrt{1+(\lambda_1-\eta)|t|^2}}$. Fix any $\mu\in ]\eta,\lambda[$. Similarly to
(\ref{main-eigencomputation}), we can check that
$$
h(t):=-\Delta \Psi + (\lambda_1-\mu)\Psi \leqslant 0 \quad \mbox{ for every } |t|\geqslant R$$
with $R$ sufficiently large. Since $u(t,y)\to 0$ as $|t|\to \infty$ and since $g(0)=0=g'(0)$ we can choose
$R$ in such a way that also $(\mu-\lambda)u-g(u)\leqslant 0$ for $|t|\geqslant R$. Letting $z:=\varphi
\Psi$, we can fix a small $\alpha>0$ so that $w:=\alpha z - u \leqslant 0$ if $|t|\leqslant R$; this is
possible because $u\in {\rm C^1}(\overline{\Omega})$, $u>0$ in $\Omega$ and $\partial u/\partial \nu < 0$ on
$\partial
\Omega$ (outward normal derivative). In summary, we have that (compare with (\ref{upper-solution}))
$$
-\Delta w - \mu w =\alpha \varphi h + (\mu-\lambda)u-g(u)=:k(t,y)$$
and $k(t,y)\leqslant 0$ for $|t|\geqslant R$, while $w\leqslant 0$ for $|t|\leqslant R$. Using the maximum
principle as in the proof of Proposition \ref{decay} we conclude that $w\leqslant 0$ for all $(t,y)$. \end{proof}

We end this section with the proof of Proposition \ref{c1-estimate}, which is contained in Propositions 
\ref{c1-estimate-(H)'} and \ref{c1-estimate-(H)0} below.
We will refer to the functional
$I$ introduced at the beginning of Section 2 as well as to the quantities $c_0$, $c_0^{\infty}$ and $c_1$
defined in (\ref{c0}), (\ref{c0-infty}) and (\ref{c1}), respectively.\\

\begin{prpstn}\label{c1-estimate-(H)'} Assume {\rm (H)$'$} holds. Then
$
c_1 < c_0 + c_0^{\infty}.$ 
\end{prpstn}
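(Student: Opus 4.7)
The plan is to construct a sign-changing test function admissible in (\ref{c1}) whose energy lies strictly below $c_0+c_0^\infty$. By Proposition \ref{c0-is-attained} let $v\in\hoz$ be a positive ground state of $I$, so $I(v)=c_0$, and let $w\in{\rm H^1_0}(\widehat F)$ be a positive ground state for the same functional on $\widehat F$, so $I(w)=c_0(\widehat F)=c_0^\infty$. For $\tau>0$ large set $w_\tau(t,y):=w(t-\tau\en,y)$; after extension by zero, $w_\tau\in{\rm H^1_0}(\widehat F)\subset\hoz$. I would look for a competitor in the two-parameter family $u_{s,\sigma}:=sv-\sigma w_\tau$ with $s,\sigma>0$.

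A standard fibering argument, analogous to the one in the last paragraph of the proof of Proposition \ref{c0-is-attained}, shows that for $\tau$ large there is a unique pair $(s_\tau,\sigma_\tau)\to(1,1)$ for which $u_{s_\tau,\sigma_\tau}^{\pm}$ satisfies $I'(u^{\pm})u^{\pm}=0$; hence $u_{s_\tau,\sigma_\tau}$ is admissible in (\ref{c1}) and the task reduces to proving $I(u_{s_\tau,\sigma_\tau})<c_0+c_0^\infty$ for $\tau$ large. Expanding,
$$
I(u_{s,\sigma})=I(sv)+I(\sigma w_\tau)-s\sigma\int\nabla v\cdot\nabla w_\tau+\frac{1}{p}R_p(s,\sigma)+\frac{1}{q}R_q(s,\sigma),
$$
with $R_r(s,\sigma):=s^r\int v^r+\sigma^r\int w_\tau^r-\int|sv-\sigma w_\tau|^r\geqslant 0$ for $r\in\{p,q\}$. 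Since $I(sv)\leqslant c_0$ and $I(\sigma w_\tau)\leqslant c_0^\infty$, it suffices to show the $\tau$-dependent correction is strictly negative. Integration by parts, using the limit equation satisfied by $w$ in $\widehat F$, gives
$$
\int\nabla v\cdot\nabla w_\tau=\int v\bigl(w_\tau^{p-1}+w_\tau^{q-1}\bigr)+\int_{\partial\widehat F}v\,|\partial_n w_\tau|.
$$

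The main obstacle will be to balance these exponentially small quantities. At $(s,\sigma)=(1,1)$ the bulk pieces $\int v\,w_\tau^{p-1}$ and $\int v\,w_\tau^{q-1}$ of the gradient cross-term cancel against the leading parts of $R_p$ and $R_q$ (precisely this cancellation forces $(s_\tau,\sigma_\tau)\to(1,1)$), leaving a strictly negative boundary contribution $-\int_{\partial\widehat F}v\,|\partial_n w_\tau|$ together with positive higher-order bulk remainders of order $\int v^{p-1}w_\tau+\int v^{q-1}w_\tau$. Using the upper decay bounds of Propositions \ref{decay} (for $w$) and \ref{decay-for-(H)} (for $v$), the higher-order remainders are smaller than the boundary term by factors of $v(\tau)^{p-2}$ or $v(\tau)^{q-2}$, hence negligible \emph{provided} the boundary term admits a matching lower bound. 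This is where hypothesis (H)$'$ enters decisively: the polynomial flare $(1+a/|t|^m)F\subset\Omega^t$, combined with uniform $C^{1,1}$ charts, yields a Hopf-type lower bound on $v$ near $\partial\widehat F$ at $|t|\sim\tau$ of the same exponential order as its upper bound (in the spirit of Proposition \ref{optimal-decay}(b), adapted to $\Omega$). This forces the boundary term to dominate the positive remainders strictly, giving $c_1<c_0+c_0^\infty$.
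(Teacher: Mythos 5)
Your construction is fundamentally different from the paper's, and it has a fatal sign problem. You integrate by parts over $\widehat F$; since $w_\tau>0$ in $\widehat F$ and $w_\tau=0$ on $\partial\widehat F$, the outward normal derivative is $\leq 0$, so the correct identity is
$\int\nabla v\cdot\nabla w_\tau=\int v\,(w_\tau^{p-1}+w_\tau^{q-1})\;-\;\int_{\partial\widehat F}v\,|\partial_n w_\tau|$,
with a \emph{minus} sign in front of the boundary integral, not a plus. Feeding the corrected identity into your expansion, after the bulk term $\int v\,w_\tau^{p-1}$ cancels against the leading part of $R_p$ (and similarly for $q$), the leftover correction to $I(sv-\sigma w_\tau)-I(sv)-I(\sigma w_\tau)$ at $(s,\sigma)=(1,1)$ is
$+\int_{\partial\widehat F}v\,|\partial_n w_\tau|+\int v^{p-1}w_\tau+\int v^{q-1}w_\tau+\cdots,$
which is \emph{positive}: the interface where $v$ and $w_\tau$ compete costs energy rather than saving it. Adjusting $(s,\sigma)$ near $(1,1)$ only adds a quadratic (hence higher order) penalty, so the argument as conceived pushes the level above $c_0+c_0^\infty$, not below. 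In addition, the "Hopf lemma adapted to $\Omega$" you invoke is not available: Proposition~\ref{optimal-decay}(b) is proved only on the straight cylinder $\widehat F$, and Proposition~\ref{decay-for-(H)} gives upper bounds $e^{-\sqrt{\overline\lambda}\,|t|}$ for every $\overline\lambda<\lambda_1$ but no matching lower bound at rate $e^{-\sqrt{\lambda_1}|t|}$; in fact, since $\Omega^t\supsetneq F$, $v$ generically decays \emph{slower} than $\psi$, so your comparison of the boundary term with the bulk remainders is not controlled by the tools in the paper.

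You have also misread what (H)$'$ is for. The paper does not use the polynomial flare $(1+a/|t|^m)F\subset\Omega^t$ to derive lower bounds on $v$; it uses it to make room for a slight \emph{dilation} of the ground state $\psi$ of $\widehat F$. The paper's proof takes $v_R=v\rho_R$ and $\psi_R(t,y)=\lambda_R^{-N/p}\psi\big(\tfrac{t-MR\mathbf e_1}{\lambda_R},\tfrac{y}{\lambda_R}\big)\eta_R(t)$ with $\lambda_R=1+a_0/((M+A)^mR^m)>1$; these two pieces have \emph{disjoint} supports, so $I(\tau_1v_R-\tau_2\psi_R)=I(\tau_1 v_R)+I(\tau_2\psi_R)$ and there is no interface interaction to estimate at all. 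The strict drop below $c_0+c_0^\infty$ comes from the fact that the $|u|^q$ term is not scale invariant: $\int\psi_R^q=\lambda_R^{N(1-q/p)}\int\psi^q+{\rm O}(e^{-\delta R})$, and since $N(1-q/p)<0$ and $\lambda_R-1\sim R^{-m}$ this produces a \emph{polynomial} gain of order $R^{-m}$ which beats the merely \emph{exponential} cutoff errors. That separation of scales is what makes the argument robust. Your scheme, by contrast, asks two exponentially small quantities of the same order to cancel with a controlled sign, which (even with the sign corrected) is much more delicate than anything the paper's decay estimates can support — and here, the sign comes out wrong.
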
 

\begin{proof} 1. We know that $c_0$ is attained by a positive function $v\in {\rm H_0^1}(\Omega)$ and $c_0^{\infty}$ is
attained by some positive function $\psi\in {\rm H_0^1}(\xR^{\ell}\times F)$ (cf.\ Proposition
\ref{c0-is-attained}). Let $m>0$ and $0 < a_1<a_0$ be given by assumption (H)$'$ and denote $A:=a_0/a_1 >1$.
Fix a large number $M$ such that $M>2A$ and
\begin{equation}\label{definition-of-M}
\frac{a_1}{a_0} < \left(\frac{M-A}{M+A}\right)^m.\end{equation}
Let $\rho:\xR\to\xR$ be a smooth function such that $\rho(s)=1$ for $|s|\leqslant 1$ and
$\rho(s)=0$ for $|s|\geqslant A$.
We define
$\rho_R$ and $\eta_R$ in $\xR^{\ell}$ by $\rho_R=\rho(|t|/R)$ and
$\eta_R(t)=\rho_R(t-MR\en)=\rho(|\frac{t}{R}-M\en|)$. We also let
$$v_R(t,y):=v(t,y)\rho_R(t)$$
and
$$\psi_R(t,y):=\lambda_R^{-N/p}\;\psi\left(\frac{t-MR\en}{\lambda_R},\frac{y}{\lambda_R}\right)\eta_R(t),$$
where
\begin{equation}\label{definition-of-lambda-R}
\lambda_R:=1+\frac{a_0}{(M+A)^mR^m}\cdot
\end{equation}
We observe that $v_R$ and $\psi_R$ have disjoint supports. Moreover, both functions  belong to
${\rm H_0^1}(\Omega)$ if $R$ is sufficiently large. Indeed, suppose $(t,y)\in \partial \Omega$ and let us show
that
$\psi_R(t,y)=0$. We may already assume that $|t-MR\en|\leqslant AR$. In particular,
\begin{equation}\label{t-sandwich}
(M-A)R\leqslant |t| \leqslant (M+A)R.\end{equation}
Now, to prove the claim it is sufficient to show that $(\frac{t-MR\en}{\lambda_R},\frac{y}{\lambda_R})
\notin \widehat{F}$, i.e.\ that $\frac{y}{\lambda_R} \notin F$. Observing that
$$
y=\left(1+\frac{a}{|t|^m}\right)\; \frac{y}{\lambda_R}$$
where, according to (\ref{definition-of-M})-(\ref{t-sandwich}),
$$
a:=a_0\; \left(\frac{|t|}{(M+A)R}\right)^m \in [a_1,a_0],$$
the conclusion follows from (H)$'$ and the fact that $(t,y)\notin \Omega$.\newline
2. Thanks to Proposition \ref{decay-for-(H)} (with $\lambda=0$), we know that $|v(t,y)|+|\nabla v(t,y)|={\rm
O}(e^{-\delta|t|})$ and similarly for $\psi$. Here and henceforth $\delta$ denotes
various positive constants. It then follows easily that $I(v_R)\to
I(v)$ and $I(\psi_R)\to I(\psi)$ as $R\to \infty$ and also that
\begin{equation}\label{exponential-estimate}
I(v_R)= I(v)+{\rm O}(e^{-\delta R}), \qquad
I(\psi_R)= I(\psi)+{\rm O}(e^{-\delta R}).
\end{equation}
In fact, the second estimate can be improved, observing that 
$$
\int \psi_R^p=\int\psi^p\rho_R^p=\int \psi^p+\int \psi^p(\rho_R^p-1)=\int \psi^p + {\rm O}(e^{-\delta R})$$
and similarly $\int |\nabla \psi_R|^2=\int |\nabla \psi|^2 + {\rm O}(e^{-\delta R})$, 
while
$$
\int \psi_R^q=\lambda_R^{N(1-\frac{q}{p})}\int\psi^q+{\rm O}(e^{-\delta R})
$$
so that
\begin{eqnarray*}I(\psi_R)&= & I(\psi) + \left(1-\lambda_R^{N(1-\frac{q}{p})}\right) \int\psi^q+ {\rm
O}(e^{-\delta R})\\
&\leqslant & I(\psi) - N\left(1-\frac{q}{p}\right)\; \frac{a_0}{(M+A)^mR^m} \;\int \psi^q + {\rm O}(e^{-\delta
R}),\end{eqnarray*}
whence, for every sufficiently large $R$,
\begin{equation}\label{strict-inequality-for-psi}
I(\psi_R) < I(\psi).\end{equation}
3. Clearly, as in (\ref{exponential-estimate})-(\ref{strict-inequality-for-psi}), for large $R$ and
uniformly for $\tau_1, \tau_2
\in [1/2,2]$, we have that
\begin{eqnarray*}
I(\tau_1v_R-\tau_2\psi_R)&=&I(\tau_1v_R)+I(\tau_2\psi_R) < I(\tau_1 v)+I(\tau_2\psi)\\
&\leqslant&
\sup_{s\geqslant 0} I(sv)+\sup_{s\geqslant 0} I(s\psi) = c_0+c_0^{\infty}.
\end{eqnarray*}
The last equality above is a direct consequence of the definitions of $c_0$ and $c_0^{\infty}$, by standard
arguments (cf.\ \cite{BN,CSS,Wi}). In summary, there exists $R_0$ such that
\begin{equation}\label{final-energy-estimate-(H)'}
\sup_{1/2 \leqslant \tau_1,\tau_2\leqslant 2} I(\tau_1v_R-\tau_2\psi_R)<c_0+c_0^{\infty},\qquad \forall
R\geqslant R_0.
\end{equation}
4. Thanks to (\ref{final-energy-estimate-(H)'}), to complete the proof it remains to show that there exist
$\tau_1, \tau_2\in [1/2,2]$ and $R\geqslant R_0$ such that
$
w:=\tau_1v_R-\tau_2\psi_R$ satisfies  $I'(w^{\pm})w^{\pm}=0$.
Since $v_R$ and $\psi_R$ have disjoint supports, this amounts to prove that there exist
$\tau_1, \tau_2\in [1/2,2]$ and $R\geqslant R_0$ such that
\begin{equation}\label{I'(v,psi)=0}
I'(\tau_1v_R)v_R=0 \qquad \mbox{ and } \qquad I'(\tau_2\psi_R)\psi_R=0.
\end{equation}
Now, we have that $I'(v_R/2)v_R\to I'(v/2)v >0$ and $I'(2v_R)v_R\to I'(2v)v <0$ as $R\to \infty$ and
similarly for $\psi$. Hence (\ref{I'(v,psi)=0}) follows by applying the intermediate value theorem.\end{proof}

\begin{prpstn}\label{c1-estimate-(H)0} Assume {\rm (H)$_0$} holds and moreover that $q>(N+2)/(N-2)$. Then
$
c_1 < c_0 + c_0^{\infty}=c_0+S^{N/2}/N$. 
\end{prpstn}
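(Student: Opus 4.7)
The plan is to mimic the construction in Proposition \ref{c1-estimate-(H)'}, but since $c_0^\infty=S^{N/2}/N$ is not attained in case (H)$_0$, the role of the second ground state $\psi$ will be played by a concentrating Aubin--Talenti bubble. I would first take $v\in {\rm H_0^1}(\Omega)$ to be the positive function attaining $c_0$ supplied by Proposition \ref{c0-is-attained}, and fix an interior point $z_0\in\Omega$ (say $z_0=0$) with $B(z_0,2r_0)\subset\Omega$. For small $\eps>0$, let $U_\eps$ be the standard Aubin--Talenti instanton centered at $z_0$, normalized so that $\int|\nabla U_\eps|^2=\int U_\eps^p=S^{N/2}$.

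The test function will be $w_\eps:=\tau_1\tilde v_\eps-\tau_2\tilde U_\eps$, where $\tilde v_\eps:=v\chi_\eps$ and $\tilde U_\eps:=U_\eps\xi_\eps$ for smooth cutoffs $\chi_\eps$ vanishing on $B(z_0,\eps^a)$ and $\xi_\eps$ supported in $B(z_0,\eps^a/2)$. By construction $\tilde v_\eps$ and $\tilde U_\eps$ are nonnegative with disjoint supports, so $w_\eps^+=\tau_1\tilde v_\eps$, $w_\eps^-=\tau_2\tilde U_\eps$ and $I(w_\eps)=I(\tau_1\tilde v_\eps)+I(\tau_2\tilde U_\eps)$. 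Exactly as in step~4 of the proof of Proposition \ref{c1-estimate-(H)'}, $\tau_1,\tau_2\in[1/2,2]$ can be chosen by the intermediate value theorem so that $I'(\tau_1\tilde v_\eps)\tilde v_\eps=0$ and $I'(\tau_2\tilde U_\eps)\tilde U_\eps=0$; consequently $c_1\leqslant I(w_\eps)$.

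The main (and essentially only) obstacle, explaining the hypothesis $q>(N+2)/(N-2)$, is the interplay between the cutoff errors and the $L^q$ gain. Standard Brezis--Nirenberg-type estimates yield
$$
\int|\nabla\tilde U_\eps|^2=S^{N/2}+{\rm O}(\eps^{(1-a)(N-2)}),\quad\int\tilde U_\eps^p=S^{N/2}+{\rm O}(\eps^{(1-a)N}),\quad\int\tilde U_\eps^q=K\eps^{N-q(N-2)/2}(1+{\rm o}(1)),
$$
while localizing $v$ produces $|I(\tau\tilde v_\eps)-I(\tau v)|={\rm O}(\eps^{(N-2)a})$ uniformly for $\tau$ in a compact set. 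Optimizing over scalar multiples gives
$$
\sup_{t>0}I(t\tilde U_\eps)\leqslant\frac{S^{N/2}}{N}-c\,\eps^{N-q(N-2)/2}+{\rm O}(\eps^{(1-a)(N-2)})\quad\mbox{and}\quad\sup_{t>0}I(t\tilde v_\eps)=c_0+{\rm O}(\eps^{(N-2)a}),
$$
for some $c>0$. For both error terms to be of strictly smaller order than the $L^q$ gain $\eps^{N-q(N-2)/2}$, the exponent $a$ must lie in the open interval
$$
\left(\frac{N-q(N-2)/2}{N-2},\;1-\frac{N-q(N-2)/2}{N-2}\right),
$$
which is nonempty precisely when $q>(N+2)/(N-2)$. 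Once such an $a$ is fixed, the combined estimate yields $I(w_\eps)<c_0+S^{N/2}/N$ for all sufficiently small $\eps>0$, finishing the proof.
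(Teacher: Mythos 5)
Your proposal is correct and follows essentially the same approach as the paper's proof: glue the positive ground state $v$ (cut out near a point) to a truncated Aubin--Talenti bubble with disjoint supports, and use the lower-order $L^q$ contribution of the bubble to beat the cutoff errors. The only cosmetic difference is that you parametrize the cutoff radius as $\eps^a$ and optimize over $a$, finding the admissible interval $\bigl(\beta/(N-2),\,1-\beta/(N-2)\bigr)$ with $\beta=N-q(N-2)/2$, whereas the paper directly fixes the cutoff at $\sqrt{\eps}$ (i.e.\ $a=1/2$, the midpoint of your interval), leading to the same threshold $q>(N+2)/(N-2)$.
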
 

\begin{proof} Let $U(x)=c_N/(1+|x|^2)^{(N-2)/2}$ be the Talenti instanton, normalized in such a way that $\int
|U|^p=\int |\nabla U|^2=S^{N/2}$  (i.e.\ $c_N=(N(N-2))^{(N-2)/4}$). Let $U_{\eps}(x)=\eps^{-N/p}U(x/\eps)$
be its rescaling, so that also
$\int |U_{\eps}|^p=\int |\nabla U_{\eps}|^2=S^{N/2}$. The following argument is similar to that in
\cite{Zhu}, except that we cut down the least energy solution and also $U_\eps$ and estimate the error in
doing so, instead of computing the interference between their energies.

Recall that, without loss of generality, we are assuming that $0\in\Om$. By Proposition
\ref{c0-is-attained},  we know that
$c_0$ is achieved by a positive function $v\in {\rm H^1_0}(\Om)\cap {\rm C^1}(\Om)$. 
Let $\rho$, $\eta:\xR\to\xR$ be smooth functions such that $\rho(s)=1$ for $|s|\leqslant 1$,
$\rho(s)=0$ for $|s|\geqslant 2$, $\eta(s)=0$ for $|s|\leqslant 2$ and
$\eta(s)=1$ for $|s|\geqslant 3$.
We define
$\rho_\eps$ and $\eta_\eps:\xR^N\to\xR$ by $\rho_\eps(x)=\rho(|x|/\sqrt{\eps})$
and $\eta_\eps(x)=\eta(|x|/\sqrt{\eps})$.
We also define
$$u_\eps:=U_\eps\;\rho_\eps\qquad \mbox{  and } \qquad v_\eps:=v\;\eta_\eps .$$ 
It is clear that $u_\eps$ and $v_\eps$ have disjoint supports and that they both belong to ${\rm H_0^1}(\Om)$. 
We can estimate
\begin{eqnarray*}
\int |\nabla v_{\eps}|^2 &\leqslant & \int |\nabla v|^2 +2\left( \int_{2\eps^{1/2}\leqslant |x|\leqslant
3\eps^{1/2}}(|\nabla v|^2\eta_{\eps}^2 + v^2|\nabla \eta_{\eps}|^2)\right)\\
&\leqslant &  \int |\nabla v|^2 + {\rm O}(\eps^{N/2}) +{\rm O}(\eps^{(N-2)/2}) \\
& = &\int |\nabla v|^2 + {\rm O}(\eps^{(N-2)/2}), 
\end{eqnarray*}
while
$$
\int  v_{\eps}^p = \int v^p + \int v^p (\eta_\eps^p-1)\geqslant  \int v^p  -  \int_{|x|\leqslant
3\eps^{1/2}} v^p \geqslant  \int v^p + {\rm O}(\eps^{N/2})
$$
and similarly for  $\int v_\eps^q$, so that
\begin{equation}\label{estimate-for-v}
I(v_\eps)\leqslant I(v) + {\rm O}(\eps^{(N-2)/2}).
\end{equation} 
As for $u_\eps$,
\begin{eqnarray*}
\int |\nabla u_{\eps}|^2 &\leqslant & \int |\nabla U_\eps|^2 + 2\left( \int |\nabla U_\eps|^2\rho_{\eps}^2 +
U_\eps^2|\nabla \rho_{\eps}|^2\right)\\ &\leqslant & S^{N/2}   +{\rm
O}(\eps^{(N-2)/2}),  
\end{eqnarray*}
while,  denoting by $c>0$ some constant which is independent of $\eps$,
$$
\int u_\eps^p \geqslant S^{N/2} + {\rm O}(\eps^{N/2}) \quad \mbox{ and } \quad
\int u_\eps^q \geqslant c\;\eps^{N(1-\frac{q}{p})},$$
as can be checked directly, using the explicit expression of $ U_\eps$. In summary,
\begin{equation}\label{estimate-for-U}
I(u_\eps)\leqslant \left(\frac{1}{2}-\frac{1}{p}\right) \;S^{N/2} +  {\rm O}(\eps^{\frac{N-2}{2}})-
c\;\eps^{N(1-\frac{q}{p})}.
\end{equation} 
Combining (\ref{estimate-for-v}) and (\ref{estimate-for-U}) yields
\begin{equation}\label{estimate-for-U-and-v}
I(u_\eps)+I(v_\eps)\leqslant c_0 +\frac{S^{N/2}}{N} +  c_1\; \eps^{\frac{N-2}{2}}-
c_2\;\eps^{N(1-\frac{q}{p})},
\end{equation}
for some  positive constants $c_1$ and $c_2$. In particular,
\begin{equation}\label{final-energy-estimate-(H)0}
I(u_\eps)+I(v_\eps)< c_0 +\frac{S^{N/2}}{N} 
\end{equation}
if $\eps$ is sufficiently small since, by assumption, $\frac{N-2}{2}>N(1-\frac{q}{p})$; indeed, this
condition is equivalent to $q>p-1=(N+2)/(N-2)$. From (\ref{final-energy-estimate-(H)0}) we can end the proof
of Proposition \ref{c1-estimate-(H)0} with similar arguments as in Steps~3 and~4 in the proof of Proposition
\ref{c1-estimate-(H)'}. \end{proof}

\begin{rmrk}
As observed at the beginning of\/ {\rm Section 2}, for simplicity of notations we have assumed that
$\mu=1$ in problem\/ {\rm (P)}. In the general case,\/ {\rm (\ref{estimate-for-U-and-v})} reads as
$$I(u_\eps)+I(v_\eps)\leqslant c_0 +\frac{S^{N/2}}{N} +  c_1\; \eps^{\frac{N-2}{2}}-
\mu\; c_2\;\eps^{N(1-\frac{q}{p})}.
$$
Thus one still has\/ {\rm (\ref{final-energy-estimate-(H)0})} in case $q=(N+2)/(N-2)$ provided $\mu$ is sufficiently
large.
\end{rmrk}

\end{document}